\newcommand{\dd}{\; \mathrm{d}}
\tikzstyle{block} = [draw, rectangle, 
\tikzstyle{cblock} = [draw, rectangle, 
\tikzstyle{circ} = [draw, circle, 
\tikzstyle{ccirc} = [draw, circle, 
\tikzstyle{blueblock} = [draw, rectangle, 
\tikzstyle{blckblock} = [draw, rectangle, 
\tikzstyle{hblock} = [draw, rectangle, 
\tikzstyle{hcblock} = [draw, rectangle, 
\tikzstyle{eblock} = [draw, rectangle, 
\tikzstyle{outerblock} = [draw, rectangle, dashed, inner sep = .85em]
\tikzstyle{sum} = [draw, circle, node distance=1cm]
\tikzstyle{var} = [draw, rectangle, minimum height = 3em, minimum width=0em]
\tikzstyle{pinstyle} = [pin edge={to-,thin,black}]
\tikzstyle{input} = [coordinate]
\tikzstyle{output} = [coordinate]
\newtheorem{thm}{Theorem}
\newtheorem{proposition}[thm]{Proposition}
\newtheorem{definition}[thm]{Definition}
\newtheorem{corollary}[thm]{Corollary}
\newtheorem{remark}[thm]{Remark}
\journal{Systems \& Control Letters}
\begin{document}

\begin{frontmatter}



\title{On a Canonical Distributed Controller in the Behavioral Framework\tnoteref{erc}}

\tnotetext[erc]{This work has received funding from the European Research Council (ERC), Advanced Research Grant SYSDYNET, under the European Union's Horizon 2020 research and innovation programme (Grant Agreement No. 694504).}

\author[label]{Tom R.V. Steentjes\corref{cor1}}
\ead{t.r.v.steentjes@tue.nl}
\cortext[cor1]{Corresponding author}
\author[label]{Mircea Lazar}
\ead{m.lazar@tue.nl}
\author[label]{Paul M.J. Van den Hof}
\ead{p.m.j.vandenhof@tue.nl}

            
\address[label]{Department of Electrical Engineering, Eindhoven University of Technology, P.O. Box 513, Eindhoven, 5600 MB, The Netherlands}            

\begin{abstract}
Control in a classical transfer function or state-space setting typically views a controller as a signal processor: sensor outputs are mapped to actuator inputs. In behavioral system theory, control is simply viewed as interconnection; the interconnection of a plant with a controller. In this paper we consider the problem of control of interconnected systems in a behavioral setting. The behavioral setting is especially fit for modelling interconnected systems, because it allows for the interconnection of subsystems without imposing inputs and outputs. We introduce a so-called \emph{canonical} distributed controller that implements a given interconnected behavior that is desired, provided that necessary and sufficient conditions hold true. The controller design can be performed in a decentralized manner, in the sense that a local controller only depends on the local system behavior. Regularity of interconnections is an important property in behavioral control that yields feedback interconnections. We provide conditions under which the interconnection of this distributed controller with the plant is regular. Furthermore, we show that the interconnections of subsystems of the canonical distributed controller are regular if and only if the interconnections of the plant and desired behavior are regular.
\end{abstract}



\begin{keyword}
behavioral control \sep distributed control \sep canonical controller \sep interconnected systems



\end{keyword}

\end{frontmatter}


\section{Introduction}
When physical systems are interconnected, no distinction between inputs and outputs is made. Think for example of the interconnection of two RLC-circuits through their terminals or the interconnection of two mass-spring-damper systems. Typical transfer-function and input-state-output representations inherently impose an input-output partition of system variables. One of the main features of the \emph{behavioral} approach to system theory, is that it does not take an input-output structure as a starting point to describe systems: a mathematical model is simply the relation between system variables. In the case of dynamical systems, the set of all time trajectories that are compatible with the model is called the behavior. The behavioral approach has been advocated as a convenient starting point in several applications, among which in the context of interconnected systems~\citep{willems2007} and the context of control~\citep{willems97}.

In the context of interconnected systems, modelling can be performed through tearing (viewing the interconnected system as an interconnection of subsystems), zooming (modelling the subsystems), and linking (modelling the interconnections)~\citep{willems2007}. Interconnection of systems in a behavioral setting means variable sharing. When two masses are physically interconnected, the laws of motion for the first mass involve the position of the second mass and vice versa; the laws of motion of both masses together dictate the behavior of the interconnected system. Thinking of system  interconnections makes the modelling of interconnected systems remarkably simple. Partitioning variables into input and output variables is appropriate in signal processing, feedback control based on sensor outputs and other unilateral systems, but often unnecessary for physical system variables~\citep{willems2007}.

Feedback control based on sensor outputs to generate actuator inputs, where the controller is viewed as a signal processor~\citep{trentelman2011}, holds an important place in control theory. It has been argued that many practical control devices cannot be interpreted as feedback controllers, however, such as passive-vibration control systems, passive suspension systems or operational amplifiers~\citep{willems97}. Indeed, such control systems do not inherit a signal flow, but can be interpreted as an interconnection in a behavioral setting. Control by interconnection allows the control design to take place without distinguishing between control inputs and measured outputs, \emph{a priori}~\citep{willems97}, and can be performed for, e.g., stabilization~\citep{kuijper95}, $\mathscr{H}_\infty$ control~\citep{weiland97} and robust control~\citep{trentelman2011robust}.

Control by interconnection in a behavioral setting means restricting the behavior of the system that is to be controlled, by interconnecting it with a controller. By specifying a behavior that is desired for the controlled system, an important control problem is to determine the existence of a controller such that the controlled system's behavior is equal to the desired behavior. This is called the implementability problem~\citep{trentelman2011}. The \emph{canonical} controller plays a major role in the implementability problem: the canonical controller implements the desired behavior if and only if the desired behavior is implementable~\citep{vanderschaft2003}, \citep{julius2005}.

In this paper, we will consider \emph{distributed} control in a behavioral setting. In particular, we will consider distributed control of interconnected linear time-invariant systems. As a natural consequence of behavioral interconnections, we consider a distributed controller to be an interconnected system itself, i.e., we consider it to consist of subsystems that are interconnected without imposing signal flows between subsystems. Several types of interconnections become of interest in this problem: interconnections between subsystems of the to-be-controlled interconnected system (plant), interconnections between subsystems of the plant and subsystems of the distributed controller, and interconnections between subsystems of the distributed controller. Given a desired behavior for the controlled interconnected system that has the same interconnection structure as the plant, the considered distributed control problem is to determine the existence of a distributed controller that implements the desired behavior. We introduce a canonical distributed controller which implements the desired interconnected behavior under necessary and sufficient implementability conditions on the manifest plant and desired behavior. The distributed canonical controller has an attractive interconnection structure, in the sense that two of its subsystems are interconnected only if two subsystems of the plant or desired behavior are interconnected.

Distributed control with input-output partitioning and communication between subsystems of the distributed controller follows as an important special case of distributed control in a behavioral setting. An important question is: when can the canonical distributed controller be implemented with feedback interconnections? Following up on this question: When can the interconnections between controller subsystems be implemented as communication channels? The main concept in the solution to these problems is \emph{regularity} of the corresponding interconnections. We will analyze regularity of the canonical distributed controller. In particular, we show that the connections between subsystems of this distributed controller are regular if and only if connections between subsystems of the plant and desired behavior are regular.

\section{Preliminaries}
\subsubsection*{Behavioral notions} For the notions of systems in the behavioral setting, we will follow the
notation in~\citep{trentelman2011}. A dynamical system is defined as a triple $\Sigma=({T},{W},\mathfrak{B})$, where ${T}\subseteq \mathbb{R}$ is the {\color{black}time axis}, ${W}$ is the signal space and $\mathfrak{B}\subseteq{W}^{T}$ is the behavior. Consider two dynamical systems $\Sigma_1=({T},{W}_1\times{W}_3,\mathfrak{B}_1)$ and $\Sigma_2=({T},{W}_2\times{W}_3,\mathfrak{B}_2)$ with the same {\color{black}time axis}, and trajectories $(w_1,w_3)\in\mathfrak{B}_1$ and $(w_2,w_3)\in\mathfrak{B}_2$, respectively. The interconnection of $\Sigma_1$ and $\Sigma_2$ through $w_3$ yields the dynamical system
\begin{align*}
\Sigma_1\wedge_{w_3}\Sigma_2:=({T},{W}_1\times {W}_2\times {W}_3,\mathfrak{B}),
\end{align*}
with $\mathfrak{B}:=\{(w_1,w_2,w_3)\,|\, (w_1,w_3)\in\mathfrak{B}_1\text{ and } (w_2,w_3)\in\mathfrak{B}_2\}$.
The manifest behavior of $\Sigma_1$ with respect to $w_1$ is
\begin{align*}
(\mathfrak{B}_1)_{w_1}:=\{w_1:{T}\rightarrow{W}_1\,|\, \exists w_3 \text{ so that } (w_1,w_3)\in\mathfrak{B}\}.
\end{align*}
The set $\mathfrak{L}^\mathtt{w}$ denotes the set of all linear differential systems $\Sigma=(\mathbb{R},\mathbb{R}^\mathtt{w},\mathfrak{B})$, with $\mathtt{w}\in\mathbb{N}$ variables, where the behavior is
\begin{align*}
\mathfrak{B}:=\{w\in\mathfrak{C}^\infty(\mathbb{R},\mathbb{R}^\mathtt{w})\,|\, R(\frac{\dd }{\dd t})w=0\},
\end{align*}
with a polynomial matrix $R\in\mathbb{R}^{\mathtt{g}\times \mathtt{w}}[\xi]$, $\mathtt{g}\in\mathbb{N}_{>0}$, and $\mathfrak{C}^\infty(\mathbb{R},\mathbb{R}^\mathtt{w})$ denotes the set of infinitely often differentiable functions from $\mathbb{R}$ to $\mathbb{R}^\mathtt{w}$.

Consider a behavior $\mathfrak{B}\in\mathfrak{L}^\mathtt{w}$. The components of $w\in\mathfrak{B}$ allow for a component-wise partition\footnote{Up to re-ordering of the components in $w$.} such that $w=(u,y)$, with $u$ input and $y$ output. {\color{black}The partition $w=(u,y)$ is called an input-output partition if $u$ is free, i.e., for all $u$ there exists a $y$ so that $(u,y)\in\mathfrak{B}$, and $y$ does not contain any further free components, i.e., $u$ is \emph{maximally} free~\citep[Definition~3.3.1]{polderman98}, cf. \citep[Definition~2.9.2]{belur2003phd}}. The \emph{number} of components in the input and output, called the input and output cardinality, is invariant, i.e., independent of the input-output partition. Henceforth, $\mathtt{m}(\mathfrak{B})$ denotes the input cardinality and $\mathtt{p}(\mathfrak{B})$ denotes the output cardinality, which implies that $\mathtt{p}(\mathfrak{B})+\mathtt{m}(\mathfrak{B})=\mathtt{w}$. For a kernel representation $R\left(\frac{\dd }{\dd t}\right)w=0$ of $\mathfrak{B}$, the output cardinality is $\mathtt{p}(\mathfrak{B})=\operatorname{rank} R$.

\subsubsection*{Control by interconnection}\label{sec:controlbeh} A controlled interconnection is the interconnection of a plant $\Sigma_p=({T},{W}\times{C},\mathcal{P})$ and a controller $\Sigma_c=({T},{C},\mathcal{C})$, with the same {\color{black}time axis}, and trajectories $(w,c)\in\mathcal{P}$ and $c\in\mathcal{C}$, respectively. The plant has two types of variables: $w$ is the to-be-controlled variable and $c$ is the control variable. The controlled interconnection is thus $\mathcal{P}\wedge_c\mathcal{C}$. A general control problem can now be formulated as: Given the plant behavior $\mathcal{P}$ and a desired behavior $\mathcal{K}\subseteq {W}^{T}$, does there exist a controller $\mathcal{C}$ so that $\mathcal{K}=(\mathcal{P}\wedge_c\mathcal{C})_w$, i.e., is $\mathcal{K}$ implementable? {\color{black}The implementability problem has been extensively studied in \citep{trentelman2011, willemstrentelman2002}. Necessary and sufficient conditions for implementability are recalled in the following theorem.
\begin{thm}[\citep{willemstrentelman2002}] \label{thm:sandwich}
Let $\mathcal{P}\in\mathfrak{L}^{\mathtt{w}+\mathtt{c}}$ be a plant with $(\mathcal{P})_w\in\mathfrak{L}^{\mathtt{w}}$ its manifest behavior and $\mathcal{N}:=\{w\in\mathcal{P}\,|\, (w,0)\in\mathcal{P}\}$ its hidden behavior. Then $\mathcal{K}\in\mathfrak{L}^\mathtt{w}$ is implementable by a controller $\mathcal{C}\in\mathfrak{L}^\mathtt{c}$ if and only if
\begin{align*}
    \mathcal{N}\subseteq \mathcal{K}\subseteq(\mathcal{P})_w.
\end{align*}
\end{thm}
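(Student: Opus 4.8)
The plan is to prove the two implications separately, with necessity essentially immediate and sufficiency resting on an explicit construction, the canonical controller. The whole mechanism will hinge on the linear structure of all behaviors involved (closure under addition and subtraction) together with the fact that every behavior in $\mathfrak{L}$ contains the zero trajectory, since these are exactly the features that make the sandwich $\mathcal{N}\subseteq\mathcal{K}\subseteq(\mathcal{P})_w$ both necessary and sufficient.

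For necessity, I would assume $\mathcal{K}=(\mathcal{P}\wedge_c\mathcal{C})_w$ for some $\mathcal{C}\in\mathfrak{L}^\mathtt{c}$. The upper bound $\mathcal{K}\subseteq(\mathcal{P})_w$ is immediate: any $w\in\mathcal{K}$ comes with a $c$ satisfying $(w,c)\in\mathcal{P}$, so in particular $w\in(\mathcal{P})_w$. For the lower bound $\mathcal{N}\subseteq\mathcal{K}$, I would use that every linear differential behavior contains the zero trajectory, so $0\in\mathcal{C}$; then for $w\in\mathcal{N}$ we have $(w,0)\in\mathcal{P}$ and $0\in\mathcal{C}$, whence $w\in(\mathcal{P}\wedge_c\mathcal{C})_w=\mathcal{K}$.

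For sufficiency, assuming $\mathcal{N}\subseteq\mathcal{K}\subseteq(\mathcal{P})_w$, I would propose the canonical controller obtained by interconnecting $\mathcal{P}$ with $\mathcal{K}$ through $w$ and then eliminating $w$, namely $\mathcal{C}:=(\mathcal{P}\wedge_w\mathcal{K})_c$, that is,
\[
\mathcal{C} = \{\, c \mid \exists\, w\in\mathcal{K} \text{ with } (w,c)\in\mathcal{P} \,\}.
\]
First I would check $\mathcal{C}\in\mathfrak{L}^\mathtt{c}$: both the interconnection and the subsequent projection keep us within $\mathfrak{L}$ by the elimination theorem~\citep{polderman98}. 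The inclusion $\mathcal{K}\subseteq(\mathcal{P}\wedge_c\mathcal{C})_w$ then follows from $\mathcal{K}\subseteq(\mathcal{P})_w$ together with the definition of $\mathcal{C}$: each $w\in\mathcal{K}$ admits a compatible $c$, which by construction lies in $\mathcal{C}$, so $(w,c)\in\mathcal{P}\wedge_c\mathcal{C}$. The reverse inclusion is where the lower bound enters: given $w\in(\mathcal{P}\wedge_c\mathcal{C})_w$ there is $c$ with $(w,c)\in\mathcal{P}$ and $c\in\mathcal{C}$, and the latter supplies some $w'\in\mathcal{K}$ with $(w',c)\in\mathcal{P}$; subtracting and using linearity of $\mathcal{P}$ gives $(w-w',0)\in\mathcal{P}$, i.e. $w-w'\in\mathcal{N}\subseteq\mathcal{K}$, and closure of $\mathcal{K}$ under addition yields $w=(w-w')+w'\in\mathcal{K}$.

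The routine parts are the two easy inclusions. The steps I expect to require the most care are confirming that the candidate $\mathcal{C}$ is genuinely a linear differential behavior, so that it qualifies as an admissible controller, and the superposition argument in the reverse inclusion that converts the gap $w-w'$ into a hidden-behavior trajectory. Both reduce to the elimination theorem and the linearity of the behaviors, so the core difficulty is conceptual rather than computational: recognizing that the canonical controller is precisely the device that attains the manifest behavior $\mathcal{K}$ under the sandwich condition.
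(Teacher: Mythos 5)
Your proof is correct, but note that the paper never actually proves Theorem~\ref{thm:sandwich}: it is recalled from the literature with a citation, and the closest in-paper argument is the proof of Proposition~\ref{prop:distrcan} (the distributed generalization), which the authors later remark specializes to Theorem~\ref{thm:sandwich} when $L=1$, $\mathtt{s}_1=0$, $\mathtt{k}_1=0$. Compared with that proof, you use the same key construction---the canonical controller $\mathcal{C}:=(\mathcal{P}\wedge_w\mathcal{K})_c$, admissible in $\mathfrak{L}^\mathtt{c}$ by the elimination theorem---but a genuinely different verification that it implements $\mathcal{K}$. The paper's route is computational: it takes minimal kernel representations $R\left(\frac{\dd}{\dd t}\right)w+M\left(\frac{\dd}{\dd t}\right)c=0$ and $K\left(\frac{\dd}{\dd t}\right)w=0$, uses the hidden-behavior inclusion to factor $K(\xi)=F(\xi)R(\xi)$, and pushes a unimodular matrix through the latent-variable representation to compute explicitly $(\mathcal{C}^\text{can})_c=\ker F\left(\frac{\dd}{\dd t}\right)M\left(\frac{\dd}{\dd t}\right)$ and then $\left(\mathcal{P}\wedge_c\mathcal{C}^\text{can}\right)_w=\ker K\left(\frac{\dd}{\dd t}\right)=\mathcal{K}$. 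Your route is representation-free: the forward inclusion uses only $\mathcal{K}\subseteq(\mathcal{P})_w$, and the reverse inclusion uses superposition---from $(w,c)\in\mathcal{P}$ and $(w',c)\in\mathcal{P}$ with $w'\in\mathcal{K}$ you get $(w-w',0)\in\mathcal{P}$, i.e.\ $w-w'\in\mathcal{N}\subseteq\mathcal{K}$, hence $w\in\mathcal{K}$ by linearity of $\mathcal{K}$. Your argument is shorter, isolates exactly where each of the two sandwich inclusions enters, and transfers to any class of behaviors closed under superposition and elimination; the paper's polynomial calculation, by contrast, yields explicit kernel representations of the controller and of the controlled behavior, which the authors reuse downstream (e.g.\ in the rank-based regularity analysis). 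Both arguments invoke the elimination theorem at the same spot, to certify that the canonical controller is a legitimate element of $\mathfrak{L}^\mathtt{c}$.
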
}

\section{Control of interconnected systems}
\subsection{Plant interconnections}
For the design of a distributed controller, we consider {\color{black}$L$ systems (plants) $\Sigma_{p_i}=({T},{W}_i\times{S}_i\times{C}_i,\mathcal{P}_i)$, $i\in\mathbb{Z}_{[1:L]}:=\mathbb{Z}\cap [1, L]$}, having trajectories $(w_i,s_i,c_i)\in\mathcal{P}_i$, with $w_i$ the to-be-controlled variable, $s_i$ the inter-plant connection variable and $c_i$ the control variable. Partition the inter-plant connection variable $s_i$ into $s_{ij}$, the variable that behavior $\mathcal{P}_i$ shares with $\mathcal{P}_j$. {\color{black}The variable sharing is symmetric in the sense that if $\mathcal{P}_i$ shares variable $s_{ij}$ with $\mathcal{P}_j$, then $\mathcal{P}_j$ shares variable $s_{ji}$ with $\mathcal{P}_i$ and hence $s_{ij}=s_{ji}$.} The interconnection of $\mathcal{P}_i$ and $\mathcal{P}_j$ is given by
\begin{align*}
\mathcal{P}_i\wedge_{s_{ij}} \mathcal{P}_j=\{(w_i,w_j,s_{ij},c_i,c_j)\,|\, &(w_i,s_{ij},c_i)\in\mathcal{P}_i \quad\text{ and } \\
&(w_j,s_{ij},c_j)\in\mathcal{P}_j\}.
\end{align*}
We denote the straightforward generalization of the interconnection of $\mathcal{P}_i$, $i\in\mathbb{Z}_{[1:L]}$ as $\mathcal{P}_\mathcal{I}:=\wedge_{s_i,i\in\mathbb{Z}_{[1:L]}} \mathcal{P}_i$, such that
\begin{align*}
\mathcal{P}_\mathcal{I}=\{(w,s,c)\,|\, (w_i,s_i,c_i)\in\mathcal{P}_i\text{ for all } i\in\mathbb{Z}_{[1:L]}\}.
\end{align*}
Figure \ref{fig:interP} depicts an interconnection example of three behaviors $\mathcal{P}_1$, $\mathcal{P}_2$ and $\mathcal{P}_3$, through $s_{12}$ and $s_{23}$, i.e., $\mathcal{P}_1\wedge_{s_{12}}\mathcal{P}_2\wedge_{s_{23}}\mathcal{P}_3$. When we eliminate the interconnection variables $(s_i)_{i\in\mathbb{Z}_{[1:L]}}$ from the behavior of the interconnected system, $\mathcal{P}_\mathcal{I}$, we obtain the manifest behavior of $\mathcal{P}_\mathcal{I}$ with respect to $(w,c)$. This manifest behavior of the plant interconnection with respect to $(w,c)$ is $(\mathcal{P}_\mathcal{I})_{(w,c)}=(\wedge_{s_i,i\in\mathbb{Z}_{[1:L]}} \mathcal{P}_i)_{(w,c)}$, such that
\begin{align*}
(\mathcal{P}_\mathcal{I})_{(w,c)}&=\{(w,c)\,|\,\exists s_i\in\mathfrak{C}^\infty (\mathbb{R},\mathbb{R}^{\mathtt{s}_i}), i\in\mathbb{Z}_{[1:L]},\\
&\qquad\qquad  \text{ so that } (w_i,s_i,c_i)\in\mathcal{P}_i\text{ for all } i\in\mathbb{Z}_{[1:L]}\}.
\end{align*}

\begin{figure}[!t]
\centering
\subcaptionbox{System interconnection.\label{fig:interP}}{
\begin{tikzpicture}[auto,>=latex',node distance = 1.5em]
\matrix[ampersand replacement= \|, row sep=3em, column sep =1.5em] {
\node [var,draw = none](w1) {$w_1$};
\|
\node [block, fill = white, drop shadow](P1) {$\mathcal{P}_1$};
\|
\node [var,draw = none](c1) {$c_1$};
\\
\node [var,draw = none](w2) {$w_2$};
\|
\node [block, fill = white, drop shadow](P2) {$\mathcal{P}_2$};
\|
\node [var,draw = none](c2) {$c_2$};
\\
\node [var,draw = none](w3) {$w_3$};
\|
\node [block, fill = white, drop shadow](P3) {$\mathcal{P}_3$};
\|
\node [var,draw = none](c3) {$c_3$};
\\
};

\node [left = of P1.150] (w11){}; \node [left = of P1.180] (w12){}; \node [left = of P1.210] (w13){};

\node [left = of P2.150] (w21){}; \node [left = of P2.180] (w22){}; \node [left = of P2.210] (w23){};

\node [left = of P3.150] (w31){}; \node [left = of P3.180] (w32){}; \node [left = of P3.210] (w33){};

\node [right = of P1.30] (c11){}; \node [right = of P1.0] (c12){}; \node [right = of P1.330] (c13){};

\node [right = of P2.30] (c21){}; \node [right = of P2.0] (c22){}; \node [right = of P2.330] (c23){};

\node [right = of P3.30] (c31){}; \node [right = of P3.0] (c32){}; \node [right = of P3.330] (c33){};

\draw[-] (P1.270) --(P2.90); \draw[-] (P2.135)  -- node[]{$s_{12}$} (P1.225);
\draw[-] (P1.315) -- (P2.45);

\draw[-] (P2.270) -- (P3.90); \draw[-] (P3.135) -- node[]{$s_{23}$} (P2.225);
\draw[-] (P2.315) -- (P3.45);

\draw[-] (P1.150) -- (w11); \draw[-] (P1.180) -- (w12); \draw[-] (P1.210) -- (w13);
\draw[-] (P2.150) -- (w21); \draw[-] (P2.180) -- (w22); \draw[-] (P2.210) -- (w23);
\draw[-] (P3.150) -- (w31); \draw[-] (P3.180) -- (w32); \draw[-] (P3.210) -- (w33);

\draw[-] (P1.30) -- (c11); \draw[-] (P1.0) -- (c12); \draw[-] (P1.330) -- (c13);
\draw[-] (P2.30) -- (c21); \draw[-] (P2.0) -- (c22); \draw[-] (P2.330) -- (c23);
\draw[-] (P3.30) -- (c31); \draw[-] (P3.0) -- (c32); \draw[-] (P3.330) -- (c33);
\end{tikzpicture}
}
\subcaptionbox{Controller interconnection.\label{fig:interC}}{
\begin{tikzpicture}[auto,>=latex',node distance = 1.5em, color = black]
\matrix[ampersand replacement= \|, row sep=3em, column sep =1.5em] {
\node [var,draw = none](c1) {$c_1$};
\|
\node [blckblock, fill = white, drop shadow](C1) {$\mathcal{C}_1$};
\\
\node [var,draw = none](c2) {$c_2$};
\|
\node [blckblock, fill = white, drop shadow](C2) {$\mathcal{C}_2$};
\\
\node [var,draw = none](c3) {$c_3$};
\|
\node [blckblock, fill = white, drop shadow](C3) {$\mathcal{C}_3$};
\\
};

\node [left = of C1.150] (c11){}; \node [left = of C1.180] (c12){}; \node [left = of C1.210] (c13){};

\node [left = of C2.150] (c21){}; \node [left = of C2.180] (c22){}; \node [left = of C2.210] (c23){};

\node [left = of C3.150] (c31){}; \node [left = of C3.180] (c32){}; \node [left = of C3.210] (c33){};

\draw[-] (C1.270) --(C2.90); \draw[-] (C2.135)  -- (C1.225);
\draw[-] (C1.315) -- node[]{$p_{12}$} (C2.45);

\draw[-] (C2.270) -- (C3.90); \draw[-] (C3.135) -- (C2.225);
\draw[-] (C2.315) -- node[]{$p_{23}$} (C3.45);

\draw[-] (C1.150) -- (c11); \draw[-] (C1.180) -- (c12); \draw[-] (C1.210) -- (c13);
\draw[-] (C2.150) -- (c21); \draw[-] (C2.180) -- (c22); \draw[-] (C2.210) -- (c23);
\draw[-] (C3.150) -- (c31); \draw[-] (C3.180) -- (c32); \draw[-] (C3.210) -- (c33);

\end{tikzpicture}
}
\caption{Distributed control in the behavioral framework.}
\label{fig:distrscheme}
\end{figure}
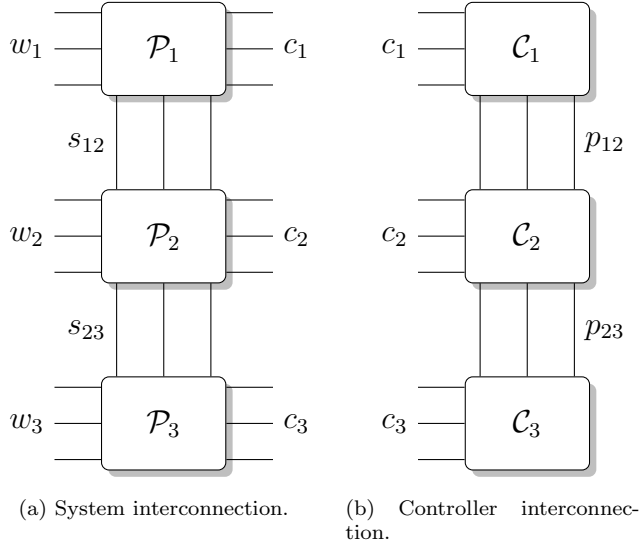

\subsection{Distributed control problem}
In the following, we will consider an interconnection of linear systems $\mathcal{P}_i\in\mathfrak{L}^{\mathtt{w}_i+\mathtt{s}_i+\mathtt{c}_i}$, $i\in\mathbb{Z}_{[1:L]}$. Given $\mathcal{K}_i\in\mathfrak{L}^{\mathtt{w}_i+\mathtt{k}_i}$, $i\in\mathbb{Z}_{[1:L]}$, let the desired behavior of the interconnected system be equal to the manifest behavior of the interconnection of $\mathcal{K}_i$ with respect to $w$, i.e.,
\begin{align*}
(\mathcal{K}_\mathcal{I})_w\!=\!(\wedge_{k_i,i\in\mathbb{Z}_{[1:L]}} &\mathcal{K}_i)_w\!=\!\{w\,|\, \exists k_i\!\in\!\mathfrak{C}^\infty (\mathbb{R},\mathbb{R}^{\mathtt{k}_i}), i\!\in\!\mathbb{Z}_{[1:L]},\\
&  \text{ so that } (w_i,k_i)\in\mathcal{K}_i, i\in\mathbb{Z}_{[1:L]}\}.
\end{align*}

Complementary to the interconnected plant, we are looking for another interconnected behavior, the controller, such that the interconnection of the plant with the controller yields the desired manifest behavior with respect to the to-be-controlled variable $w$, i.e., $(\mathcal{K}_\mathcal{I})_w$. The controller behavior is the interconnection of $\mathcal{C}_i\in\mathfrak{L}^{\mathtt{c}_i+\mathtt{p}_i}$, $i\in\mathbb{Z}_{[1:L]}$, through inter-controller connection variable $p_i$. The controller interconnection is distributed in the following sense: if $\mathcal{P}_i$ and $\mathcal{P}_j$ do not share a variable (they cannot be interconnected), then $\mathcal{C}_i$ and $\mathcal{C}_j$ do not share a variable, i.e., for each pair $(i,j)\in\mathbb{Z}_{[1:L]}^2$, it holds that $\mathtt{s}_{ij}=0$ $\Rightarrow$ $\mathtt{p}_{ij}=0$. In this way, the controller structure will reflect the plant structure and, hence, the structure of the ``closed-loop'' interconnection. This idea is exemplified in Figure~\ref{fig:interC} for the plant interconnection in Figure~\ref{fig:interP}. The chosen controller structure is a design choice that is natural in the sense  that the interconnection structure of the plant is respected. Therefore, this choice is commonly considered in the distributed control literature, cf.~\citep{andrea2003}, \citep{langbortetal2004}, \citep{camponogara2002}, \citep{rice2009}, \citep{chen2019}. Alternative distributed controller structures are, for example, hierarchical and multi-layer structures, which are designed according to multi-level or multi-resolution models~\citep{christofides2013} or through optimization~\citep{gusrialdi2012}.

Considering the `control by interconnection' problem described in Section~\ref{sec:controlbeh}, we can now analogously state the distributed control problem: Given the plant interconnection $\mathcal{P}_\mathcal{I}=\wedge_{s_i,i\in\mathbb{Z}_{[1:L]}} \mathcal{P}_i$ and a desired behavior defined by $\mathcal{K}_\mathcal{I}=\wedge_{k_i,i\in\mathbb{Z}_{[1:L]}} \mathcal{K}_i$, do there exist controllers $\mathcal{C}_i\in\mathfrak{L}^{\mathtt{c}_i+\mathtt{p}_i}$, $i\in\mathbb{Z}_{[1:L]}$, so that $(\mathcal{K}_\mathcal{I})_w=((\wedge_{s_i,i\in\mathbb{Z}_{[1:L]}} \mathcal{P}_i)\wedge_c (\wedge_{p_i, i\in\mathbb{Z}_{[1:L]}}\mathcal{C}_i))_w$? That is, does there exist a distributed controller such that the desired behavior is equal to the controlled interconnection? Figure~\ref{fig:distrcscheme} illustrates this controlled interconnection.
\begin{definition} \label{def:distrimpl}
Let $\mathcal{K}_i$, $i\in\mathbb{Z}_{[1:L]}$, be given and consider the desired interconnected system behavior $(\mathcal{K}_\mathcal{I})_w$. If there exists a distributed controller such that the controlled interconnected behavior equals the desired interconnected behavior, i.e., if there exist $\mathcal{C}_i$, $i\in\mathbb{Z}_{[1:L]}$, such that
\begin{gather}
(\wedge_{k_i,i\in\mathbb{Z}_{[1:L]}} \mathcal{K}_i)_w= \left([\wedge_{s_i,i\in\mathbb{Z}_{[1:L]}} \mathcal{P}_i]\wedge_c [\wedge_{p_i,i\in\mathbb{Z}_{[1:L]}} \mathcal{C}_i]\right)_w\nonumber\\
\Updownarrow  \label{eq:Knimplementable}\\
(\mathcal{K}_\mathcal{I})_w=\left(\mathcal{P}_\mathcal{I} \wedge_c [\wedge_{p_i,i\in\mathbb{Z}_{[1:L]}} \mathcal{C}_i]\right)_w\nonumber
\end{gather}
then $\mathcal{K}_I$ is called implementable via distributed control.

Consequently, a distributed controller with controller behaviors $\mathcal{C}_i$, $i\in\mathbb{Z}_{[1:L]}$, is said to implement $\mathcal{K}_\mathcal{I}$ if \eqref{eq:Knimplementable} holds.
\end{definition}

\begin{remark}
A natural question that comes to mind is: what prevents a desired behavior from being implementable? 
The necessary conditions of the controller implementability theorem for `centralized' control, recalled in Theorem~\ref{thm:sandwich}, reveal that there are two restrictions: (i) since control means that the behavior of the plant is restricted, the desired behavior must be a subset of the (manifest) behavior of the plant and (ii) since the hidden behavior of the plant (for $c=0$) should remain possible, the hidden behavior of the plant must be subset of the desired behavior~\citep{willemstrentelman2002}.
\end{remark}

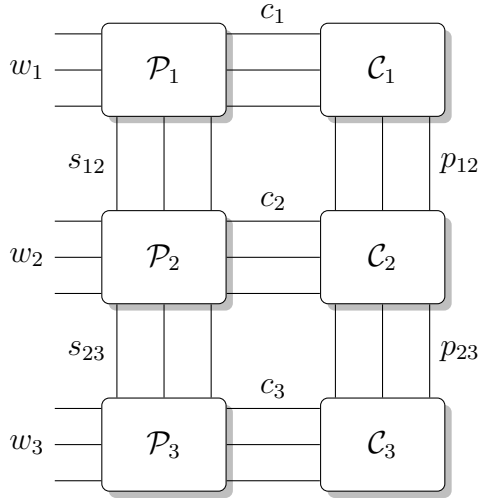
\begin{figure}[!htb]
\centering
\begin{tikzpicture}[auto,>=latex',node distance = 1.5em]
\matrix[ampersand replacement= \|, row sep=3em, column sep =1.5em](M1) {
\node [var,draw = none](w1) {$w_1$};
\|
\node [block, fill = white, drop shadow](P1) {$\mathcal{P}_1$};
\|\|
\node [blckblock, fill = white, drop shadow](C1) {$\mathcal{C}_1$};
\\
\node [var,draw = none](w2) {$w_2$};
\|
\node [block, fill = white, drop shadow](P2) {$\mathcal{P}_2$};
\|\|
\node [blckblock, fill = white, drop shadow](C2) {$\mathcal{C}_2$};
\\
\node [var,draw = none](w3) {$w_3$};
\|
\node [block, fill = white, drop shadow](P3) {$\mathcal{P}_3$};
\|\|
\node [blckblock, fill = white, drop shadow](C3) {$\mathcal{C}_3$};
\\
};

\node [left = of P1.150] (w11){}; \node [left = of P1.180] (w12){}; \node [left = of P1.210] (w13){};

\node [left = of P2.150] (w21){}; \node [left = of P2.180] (w22){}; \node [left = of P2.210] (w23){};

\node [left = of P3.150] (w31){}; \node [left = of P3.180] (w32){}; \node [left = of P3.210] (w33){};

\node [right = of P1.30] (c11){}; \node [right = of P1.0] (c12){}; \node [right = of P1.330] (c13){};

\node [right = of P2.30] (c21){}; \node [right = of P2.0] (c22){}; \node [right = of P2.330] (c23){};

\node [right = of P3.30] (c31){}; \node [right = of P3.0] (c32){}; \node [right = of P3.330] (c33){};

\draw[-] (P1.270) --(P2.90); \draw[-] (P2.135)  -- node[]{$s_{12}$} (P1.225);
\draw[-] (P1.315) -- (P2.45);

\draw[-] (P2.270) -- (P3.90); \draw[-] (P3.135) -- node[]{$s_{23}$} (P2.225);
\draw[-] (P2.315) -- (P3.45);

\draw[-] (P1.150) -- (w11); \draw[-] (P1.180) -- (w12); \draw[-] (P1.210) -- (w13);
\draw[-] (P2.150) -- (w21); \draw[-] (P2.180) -- (w22); \draw[-] (P2.210) -- (w23);
\draw[-] (P3.150) -- (w31); \draw[-] (P3.180) -- (w32); \draw[-] (P3.210) -- (w33);

{\color{black}
\draw[-] (C1.270) --(C2.90); \draw[-] (C2.135)  -- (C1.225);
\draw[-] (C1.315) -- node[]{$p_{12}$} (C2.45);

\draw[-] (C2.270) -- (C3.90); \draw[-] (C3.135) -- (C2.225);
\draw[-] (C2.315) -- node[]{$p_{23}$} (C3.45);

\draw[-] (P1.30) -- node[]{$c_1$} (C1.150); \draw[-] (C1.180) -- (P1.0); \draw[-] (C1.210) -- (P1.330);
\draw[-] (P2.30) -- node[]{$c_2$} (C2.150); \draw[-] (C2.180) -- (P2.0); \draw[-] (C2.210) -- (P2.330);
\draw[-] (P3.30) -- node[]{$c_3$} (C3.150); \draw[-] (C3.180) -- (P3.0); \draw[-] (C3.210) -- (P3.330);
}
\end{tikzpicture}
\caption{Controlled interconnection.}
\label{fig:distrcscheme}
\end{figure}

\section{Canonical distributed controller} \label{sec:ch5distrcan}
{\color{black}\subsection{Synthesis of a canonical distributed controller}}
Let $\mathcal{K}_i\in\mathfrak{L}^{\mathtt{w}_i+\mathtt{k}_i}$, $i\in\mathbb{Z}_{[1:L]}$, and consider its interconnected manifest behavior $(\mathcal{K}_\mathcal{I})_w\in\mathfrak{L}^{\mathtt{w}+\mathtt{k}}$. We define the controller
\begin{align} \label{eq:localcontr}
&\mathcal{C}_i^\text{can}:=(\mathcal{P}_i\wedge_{w_i}\mathcal{K}_i)_{(c_i,s_i,k_i)}\\
&=\!\{(c_i,s_i,k_i)\,|\, \exists w_i \text{ so that } {\color{black}(w_i,s_i,c_i)\in\mathcal{P}_i}\text{ and } (w_i,k_i)\in\mathcal{K}_i\},\nonumber
\end{align}
i.e., the manifest behavior with respect to $(c_i,s_i,k_i)$ of the interconnection of the local plant behavior $\mathcal{P}_i$ and desired behavior $\mathcal{K}_i$ through $w_i$. This interconnection is depicted in Figure \ref{fig:localc}. We call $\mathcal{C}_i^\text{can}$ a local canonical controller. By the elimination theorem~\citep[Theorem~6.2.6]{polderman98}, we have that $\mathcal{C}_i^\text{can}\in\mathfrak{L}^{\mathtt{c}_i+\mathtt{s}_i+\mathtt{k}_i}$.

Notice that by construction of the plant interconnection and interconnection defining the desired behavior, we can interconnect two canonical controllers $\mathcal{C}_i^\text{can}$ and $\mathcal{C}_j^\text{can}$ through the variables $(s_{ij},k_{ij})$, i.e., $\mathcal{C}_i^\text{can}\wedge_{(s_{ij},k_{ij})} \mathcal{C}_j^\text{can}$. In order to construct a distributed controller, we interconnect the local canonical controllers $\mathcal{C}_i^\text{can}$, $i\in\mathbb{Z}_{[1:L]}$, through $(s_i,k_i)$. The behavior of the interconnection of the local canonical controllers is
\begin{align} \label{eq:candistrcontr}
\mathcal{C}_\mathcal{I}^\text{can}=\wedge_{(s_i,k_i),i\in\mathbb{Z}_{[1:L]}}\mathcal{C}_i^\text{can},
\end{align}
which is called the canonical distributed controller.

{\color{black}\subsection{Implementability via distributed control}}
We will now provide conditions on the interconnected system and desired interconnected behavior under which the canonical distributed controller implements $\mathcal{K}_\mathcal{I}$. The hidden behavior of $\mathcal{P}_\mathcal{I}$ is defined as
\[\mathcal{N}(\mathcal{P}_\mathcal{I}):=\{w\,|\,(w,0)\in(\mathcal{P}_\mathcal{I})_{(w,c)}\}.\]
\begin{proposition} \label{prop:distrcan}
The canonical distributed controller $\mathcal{C}_\mathcal{I}^\text{can}$ implements the desired behavior $\mathcal{K}_\mathcal{I}\in\mathfrak{L}^{\mathtt{w}+\mathtt{k}}$ if and only if
\begin{align*}
\mathcal{N}(\mathcal{P}_\mathcal{I})\subseteq (\mathcal{K}_\mathcal{I})_w\subseteq (\mathcal{P}_\mathcal{I})_w.
\end{align*}
\begin{proof}
$(\Leftarrow)$ The sufficiency proof can be separated in two parts: (i) show that the distributed canonical controller satisfies ${\color{black}(\mathcal{C}_\mathcal{I}^\text{can})_c}=((\mathcal{P}_\mathcal{I})_{(w,c)}\wedge_w(\mathcal{K}_\mathcal{I})_w)_c$ and (ii) application of the implementability proof for the centralized canonical controller~\citep{willemstrentelman2002}, \citep{julius2005}. We will prove both parts (i) and (ii) for completeness.

We will first show that $(\wedge_{(s_i,k_i)}\mathcal{C}_i^\text{can})_c=\left((\wedge_{s_i}\mathcal{P}_i)_{(w,c)}\wedge_w (\wedge_{k_i} \mathcal{K}_i)_w\right)_c$, i.e., that ${\color{black}(\mathcal{C}_\mathcal{I}^\text{can})_c}=((\mathcal{P}_\mathcal{I})_{(w,c)}\wedge_w(\mathcal{K}_\mathcal{I})_w)_c$. The manifest behavior of $\wedge_{k_i}\mathcal{K}_i$ with respect to $w_i$ is
\begin{align*}
(\wedge_{k_i}\mathcal{K}_i)_w=\{(w_1,\dots,w_L)\,|\, \exists k_i,\ i\in\mathbb{Z}_{[1:L]},\text{ so that } (w_i,k_i)\in\mathcal{K}_i,\ i\in\mathbb{Z}_{[1:L]}\}
\end{align*}
and the manifest behavior of $\wedge_{s_i}\mathcal{P}_i$ with respect to $(w,c)$ is
\begin{align*}
(\wedge_{s_i}\mathcal{P}_i)_{(w,c)}=\{(w_1,\dots,w_L,c_1,\dots,c_L)\,|\, &\exists s_i,\ i\in\mathbb{Z}_{[1:L]},\text{ so that }\\
& (w_i,s_i,c_i)\in\mathcal{P}_i,\ i\in\mathbb{Z}_{[1:L]}\}.
\end{align*}
Hence, we have
\begin{align*}
\left((\wedge_{s_i}\mathcal{P}_i)_{(w,c)}\wedge_w (\wedge_{k_i} \mathcal{K}_i)_w\right)_c=\{(c_1,\dots,c_L)&\,|\, \exists (w_i,s_i,k_i),\ i\in\mathbb{Z}_{[1:L]},\text{ so that }\\
&(w_i,k_i)\in\mathcal{K}_i\text{ and } (w_i,s_i,c_i)\in\mathcal{P}_i\}.
\end{align*}
Furthermore, the manifest behavior of $\mathcal{C}_\mathcal{I}^\text{can}$ with respect to the control variable $c$ is
\begin{align*}
(\mathcal{C}_\mathcal{I}^\text{can})_c&=(\wedge_{(s_i,k_i),\ i\in\mathbb{Z}_{[1:L]}} \mathcal{C}_i^\text{can})_c\\
&=\{(c_1,\dots,c_L)\,|\, \exists (s_i,k_i),\ i\in\mathbb{Z}_{[1:L]},\text{ so that } (c_i,s_i,k_i)\in\mathcal{C}_i^\text{can}\},\\
&=\{(c_1,\dots,c_L)\,|\,\exists (w_i,s_i,k_i),\ i\in\mathbb{Z}_{[1:L]},\text{ so that }\\
 &\qquad (w_i,k_i)\in\mathcal{K}_i\text{ and } (w_i,s_i,c_i)\in\mathcal{P}_i\}.
\end{align*}
Hence, it follows that $({\color{black}\mathcal{C}_\mathcal{I}^\text{can})_c}=((\mathcal{P}_\mathcal{I})_{(w,c)}\wedge_w(\mathcal{K}_\mathcal{I})_w)_c$.

With this expression for the behavior of the canonical distributed controller, we find that the behavior of the interconnection of the manifest behavior of the canonical distributed controller and the manifest behavior of the plant is equal to
\begin{align*}
((\mathcal{P}_\mathcal{I})_{(w,c)}\wedge_c ({\color{black}(\mathcal{C}_\mathcal{I}^\text{can})_c})_w=((\mathcal{P}_\mathcal{I})_{(w,c)}\wedge_c ((\mathcal{P}_\mathcal{I})_{(w,c)}\wedge_w(\mathcal{K}_\mathcal{I})_w)_c)_w.
\end{align*}
We will now show that this behavior is in fact equal to $(\mathcal{K}_\mathcal{I})_w$. Consider minimal kernel representations for $(\mathcal{P}_\mathcal{I})_{(w,c)}$ and $(\mathcal{K}_\mathcal{I})_{w}$, respectively:
\begin{align*}
R\left(\tfrac{\dd }{\dd t}\right)w+M\left(\tfrac{\dd }{\dd t}\right)c=0,\quad K\left(\tfrac{\dd }{\dd t}\right)w=0.
\end{align*}
We therefore have that
\begin{align*}
\exists w\text { so that } \begin{bmatrix}
R\left(\frac{\dd }{\dd t}\right) & M\left(\frac{\dd }{\dd t}\right)\\ K\left(\frac{\dd }{\dd t}\right) & 0
\end{bmatrix}\begin{bmatrix}
w\\ c
\end{bmatrix}=0
\end{align*}
is a latent variable representation for $(\mathcal{C}_\mathcal{I}^\text{can})_c$. Since $\mathcal{N}(\mathcal{P}_\mathcal{I})\subseteq (\mathcal{K}_\mathcal{I})_w$ and $(\mathcal{K}_\mathcal{I})_w\subseteq (\mathcal{P}_\mathcal{I})_w$, there exists a polynomial matrix $F(\xi)$ so that $K(\xi)=F(\xi)R(\xi)$. Consider the {\color{black}polynomial} matrix $U(\xi):=\begin{bmatrix}
F(\xi) & -I\\ I & 0
\end{bmatrix}$. Post-multiplication of $U(\xi)$ with $\operatorname{col}(M(\xi), 0)$ yields $U(\xi)\begin{bmatrix}
M(\xi)\\ 0
\end{bmatrix}=\begin{bmatrix}
F(\xi)M(\xi)\\ M(\xi)
\end{bmatrix}$ and post-multiplication of $U(\xi)$ with $\operatorname{col}(-R(\xi),K(\xi))$ yields
\begin{align*}
U(\xi)\begin{bmatrix}
-R(\xi)\\ K(\xi)
\end{bmatrix}=\begin{bmatrix}
-F(\xi)R(\xi)+K(\xi)\\ -R(\xi)
\end{bmatrix}=\begin{bmatrix}
0\\ -R(\xi)
\end{bmatrix}.
\end{align*}
{\color{black}Since $U(\xi)$ is a unimodular matrix, the manifest behavior of $\mathcal{C}_\mathcal{I}^\text{can}$ with respect to $c$ consists of the $\mathfrak{C}^\infty$ solutions of $F\left(\frac{\dd }{\dd t}\right)M\left(\frac{\dd }{\dd t})\right)c=0$, by the elimination theorem~\citep[Theorem~6.2.6]{polderman98}.} We thus have ${\color{black}(\mathcal{C}_\mathcal{I}^\text{can})_c}=\{c\,|\, F\left(\frac{\dd }{\dd t}\right)M\left(\frac{\dd }{\dd t})\right)c=0\}$ so that
\begin{align*}
(\mathcal{P}_\mathcal{I})_{(w,c)}\wedge_c ((\mathcal{C}_\mathcal{I}^\text{can})_c&=\{(w,c)\,|\,\begin{bmatrix}
R\left(\frac{\dd }{\dd t}\right) & M\left(\frac{\dd }{\dd t}\right)\\ 0 & F\left(\frac{\dd }{\dd t}\right)M\left(\frac{\dd }{\dd t}\right)
\end{bmatrix}\begin{bmatrix}
w\\ c
\end{bmatrix}=0\}\\
&=\{(w,c)\,|\,\begin{bmatrix}
R\left(\frac{\dd }{\dd t}\right)\\ 0
\end{bmatrix}w=\begin{bmatrix}
-M\left(\frac{\dd }{\dd t}\right)\\ -F\left(\frac{\dd }{\dd t}\right)M\left(\frac{\dd }{\dd t}\right)
\end{bmatrix}c\}.
\end{align*}
Now, since
\begin{align*}
U(\xi)\begin{bmatrix}
R(\xi)\\ 0
\end{bmatrix}=\begin{bmatrix}
F(\xi)R(\xi)\\ R(\xi)
\end{bmatrix}\quad \text{ and }\quad U(\xi)\begin{bmatrix}
-M(\xi)\\-F(\xi)M(\xi)
\end{bmatrix}=\begin{bmatrix}
0\\ -M(\xi)
\end{bmatrix},
\end{align*}
we have $((\mathcal{P}_\mathcal{I})_{(w,c)}\wedge_c ((\mathcal{C}_\mathcal{I}^\text{can})_c)_w=\{w\,|\, FR\left(\tfrac{\dd }{\dd t}\right)w=0\}=\{w\,|\, K\left(\tfrac{\dd }{\dd t}\right)w=0\}=(\mathcal{K}_\mathcal{I})_w$.

$(\Rightarrow)$ Let the canonical distributed controller $\mathcal{C}_\mathcal{I}^\text{can}$ implement $\mathcal{K}_\mathcal{I}$. Then it holds that
\begin{align*}
    (\mathcal{K}_\mathcal{I})_w &=\left((\mathcal{P}_\mathcal{I})_{(w,c)} \wedge_c (\mathcal{C}_\mathcal{I}^\text{can})_c\right)_w\\
    &=\{w \,|\, \exists c\in (\mathcal{C}_\mathcal{I}^\text{can})_c \text{ so that } (w,c)\in(\mathcal{P}_\mathcal{I})_{(w,c)}\}.
\end{align*}
By definition, the manifest plant behavior with respect to the variable $w$ is given by
\begin{align*}
    (\mathcal{P}_\mathcal{I})_w=\{w\,|\, \exists c\in\mathfrak{C}^\infty(\mathbb{R},\mathbb{R}^\mathrm{c})\text{ so that } (w,c)\in(\mathcal{P}_\mathcal{I})_{(w,c)}\}.
\end{align*}
Hence, it follows that $(\mathcal{K}_\mathcal{I})_w\subseteq (\mathcal{P}_\mathcal{I})_w$. Further, the hidden behavior of $\mathcal{P}_\mathcal{I}$ is $\mathcal{N}(\mathcal{P}_\mathcal{I})=\{w\,|\,(w,0)\in(\mathcal{P}_\mathcal{I})_{(w,c)}\}$, which implies that $\mathcal{N}(\mathcal{P}_\mathcal{I})\subseteq (\mathcal{K}_\mathcal{I})_w$ and the proof is complete.
\end{proof}
\end{proposition}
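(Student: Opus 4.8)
The plan is to reduce the distributed implementability question to the classical (centralized) one, so that Theorem~\ref{thm:sandwich} applies directly. The key observation is that, although $\mathcal{C}_\mathcal{I}^\text{can}$ is assembled from local pieces $\mathcal{C}_i^\text{can}$ interconnected through the shared variables $(s_i,k_i)$, its manifest behavior in the control variable $c$ should coincide with the manifest behavior of the \emph{centralized} canonical controller built from the full interconnected plant and desired behavior, namely
\[
(\mathcal{C}_\mathcal{I}^\text{can})_c = \left((\mathcal{P}_\mathcal{I})_{(w,c)}\wedge_w (\mathcal{K}_\mathcal{I})_w\right)_c .
\]
Once this identity is in hand, the proposition follows from the known fact that the centralized canonical controller implements a desired behavior if and only if that behavior is implementable, which by Theorem~\ref{thm:sandwich} is equivalent to the sandwich inclusion $\mathcal{N}(\mathcal{P}_\mathcal{I})\subseteq(\mathcal{K}_\mathcal{I})_w\subseteq(\mathcal{P}_\mathcal{I})_w$.

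First I would establish the manifest-behavior identity by unfolding every interconnection and manifest map into its defining existential quantifiers. Each $\mathcal{C}_i^\text{can}$ is, by definition, obtained by eliminating the \emph{local} variable $w_i$ from $\mathcal{P}_i\wedge_{w_i}\mathcal{K}_i$; forming $\mathcal{C}_\mathcal{I}^\text{can}=\wedge_{(s_i,k_i)}\mathcal{C}_i^\text{can}$ and then passing to $(\,\cdot\,)_c$ eliminates the remaining latent variables $(s_i,k_i)$. On the other side, $(\mathcal{P}_\mathcal{I})_{(w,c)}$ eliminates the $s_i$, interconnection through $w$ with $(\mathcal{K}_\mathcal{I})_w$ ties the two families together, and $(\,\cdot\,)_c$ eliminates $w$ and the $k_i$. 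Both sides therefore reduce to the same set
\[
\{c \mid \exists (w_i,s_i,k_i),\ i\in\mathbb{Z}_{[1:L]},\ \text{so that } (w_i,k_i)\in\mathcal{K}_i \text{ and } (w_i,s_i,c_i)\in\mathcal{P}_i\},
\]
since existential quantifiers may be freely reordered. The structural point that makes local elimination legitimate is that $w_i$ appears only at the $i$-th node (it is never a shared variable): hence eliminating it locally, before interconnecting the controllers, discards no information needed for the inter-controller couplings through $(s_{ij},k_{ij})$.

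Having identified $(\mathcal{C}_\mathcal{I}^\text{can})_c$ with the centralized canonical controller, I would verify the centralized implementability directly via kernel representations, to keep the argument self-contained. Taking minimal kernel representations $R(\tfrac{\dd}{\dd t})w+M(\tfrac{\dd}{\dd t})c=0$ of $(\mathcal{P}_\mathcal{I})_{(w,c)}$ and $K(\tfrac{\dd}{\dd t})w=0$ of $(\mathcal{K}_\mathcal{I})_w$, the inclusions $\mathcal{N}(\mathcal{P}_\mathcal{I})\subseteq(\mathcal{K}_\mathcal{I})_w\subseteq(\mathcal{P}_\mathcal{I})_w$ translate into a polynomial factorization $K=FR$. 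A unimodular transformation $U$ built from $F$ then brings the kernel representation of the controlled interconnection $(\mathcal{P}_\mathcal{I})_{(w,c)}\wedge_c(\mathcal{C}_\mathcal{I}^\text{can})_c$ into a form from which the elimination theorem~\citep[Theorem~6.2.6]{polderman98} yields exactly $\{w\mid K(\tfrac{\dd}{\dd t})w=0\}=(\mathcal{K}_\mathcal{I})_w$. The converse direction is easier: if $\mathcal{C}_\mathcal{I}^\text{can}$ implements $\mathcal{K}_\mathcal{I}$, then $(\mathcal{K}_\mathcal{I})_w\subseteq(\mathcal{P}_\mathcal{I})_w$ is immediate from the definition of the manifest plant behavior, and $\mathcal{N}(\mathcal{P}_\mathcal{I})\subseteq(\mathcal{K}_\mathcal{I})_w$ follows by setting $c=0$ in the controlled interconnection.

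I expect the main obstacle to be the reduction step, i.e.\ justifying the manifest-behavior identity cleanly. Although it amounts to commuting existential quantifiers at the set-theoretic level, care is needed to (a) invoke the elimination theorem so that each $\mathcal{C}_i^\text{can}$ and each manifest behavior is again a bona fide linear differential behavior in $\mathfrak{L}$, and (b) track the symmetric variable-sharing convention $s_{ij}=s_{ji}$ (and its analogue for $k_{ij}$) so that each shared latent variable is counted once, not twice, on each side of the identity. The kernel-representation computation in the second step is then routine polynomial matrix algebra.
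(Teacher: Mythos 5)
Your proposal is correct and follows essentially the same route as the paper's own proof: part (i) identifies $(\mathcal{C}_\mathcal{I}^\text{can})_c$ with the centralized canonical controller $\left((\mathcal{P}_\mathcal{I})_{(w,c)}\wedge_w(\mathcal{K}_\mathcal{I})_w\right)_c$ by unfolding and reordering the existential quantifiers, and part (ii) verifies centralized implementability via minimal kernel representations, the factorization $K=FR$, and a unimodular transformation followed by the elimination theorem, with the same easy necessity argument. The only difference is presentational: you flag the bookkeeping of the shared variables $s_{ij}=s_{ji}$ explicitly, which the paper handles implicitly in its set-theoretic computation.
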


\begin{figure}[!t]
\centering
\begin{tikzpicture}[auto,>=latex',node distance = 1.5em]
{\color{black}
\matrix[ampersand replacement= \|, row sep=3em, column sep =1.5em](M1) {
\node [var,draw = none](ci) {$c_i$};
\|
\node [blckblock, fill = white, drop shadow](Pi) {\reflectbox{$\mathcal{P}_i$}};
\|\|
\node [blckblock, fill = white, drop shadow](Ki) {$\mathcal{K}_i$};
\\
};
\node [left = of Pi.150] (ci1){}; \node [left = of Pi.180] (ci2){}; \node [left = of Pi.210] (ci3){};

\node [below = of Pi.270] (si1){}; \node [below = of Pi.225] (si2){}; \node [below = of Pi.315] (si3){};

\node [below = of Ki.270] (ki1){}; \node [below = of Ki.225] (ki2){}; \node [below = of Ki.315] (ki3){};

\draw[-] (Pi.150) -- (ci1); \draw[-] (Pi.180) -- (ci2); \draw[-] (Pi.210) -- (ci3);
\draw[-] (Pi.270) --(si1); \draw[-] (si2)  -- node[pos=.05]{$s_{i}$} (Pi.225);
\draw[-] (Pi.315) -- (si3);

\draw[-] (Ki.270) --(ki1); \draw[-] (ki2)  -- node[pos=.05]{$k_{i}$} (Ki.225);
\draw[-] (Ki.315) -- (ki3);

\draw[-] (Pi.30) -- node[]{$w_i$} (Ki.150); \draw[-] (Ki.180) -- (Pi.0); \draw[-] (Ki.210) -- (Pi.330);

\node[outerblock, fill opacity =0, fit=(Pi) (Ki)] (Ci) {};
\node [above =.5em of Ci.30] {$\mathcal{C}_i^\text{can}$};
}
\end{tikzpicture}
\caption{Local canonical controller. The mirrored plant notation emphasizes that the control and to-be-controlled variables of $\mathcal{P}_i$ are reversed inside the canonical controller.}
\label{fig:localc}
\end{figure}
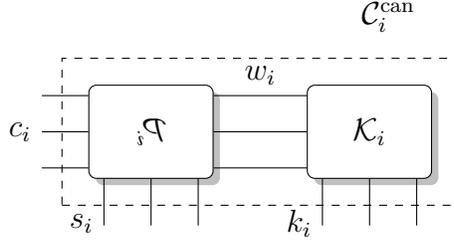

\begin{remark}
The manifest behavior of the controller~\eqref{eq:candistrcontr} with respect to the control variable $c$ is equal to the behavior of the ``central'' canonical controller for the desired interconnected behavior, cf.~\citep{julius2005}. Intuitively, this is sensible, see e.g. the controlled interconnection for the example with three subsystems in Figure~\ref{fig:distrpkscheme}. The controllers $\mathcal{C}_i^\text{can}$ are based on ``local'' behavior $\mathcal{P}_i$, while the central canonical controller is based on $(\mathcal{P}_\mathcal{I})_{(w,c)}$. {\color{black}From a synthesis point of view}, the control design is decentralized in the sense that only the subsystem $\mathcal{P}_i$ of the interconnected system is required to determine $\mathcal{C}_i^\text{can}$, once a desired interconnected behavior has been specified.
\end{remark}

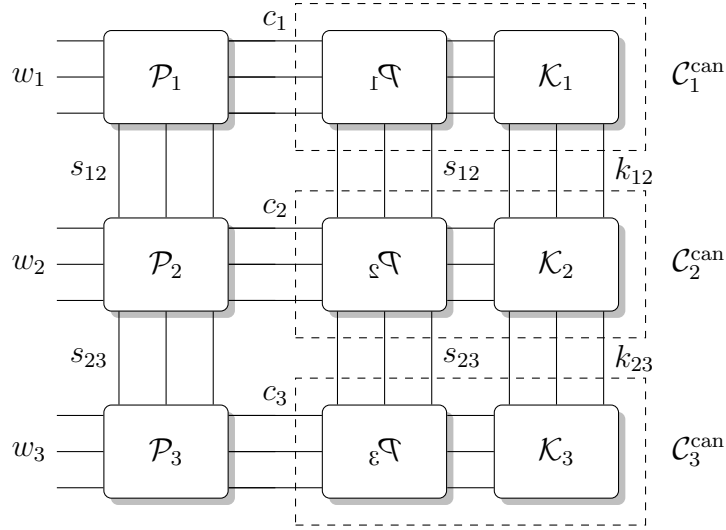
\begin{figure}[!t]
\centering
\begin{tikzpicture}[auto,>=latex',node distance = 1.5em]
\matrix[ampersand replacement= \|, row sep=3em, column sep =1.5em](M1) {
\node [var,draw = none](w1) {$w_1$};
\|
\node [block, fill = white, drop shadow](P1) {$\mathcal{P}_1$};
\|\|
\node [blckblock, fill = white, drop shadow](Pr1) {\reflectbox{$\mathcal{P}_1$}};
\|
\node [blckblock, fill = white, drop shadow](K1) {$\mathcal{K}_1$};
\\
\node [var,draw = none](w2) {$w_2$};
\|
\node [block, fill = white, drop shadow](P2) {$\mathcal{P}_2$};
\|\|
\node [blckblock, fill = white, drop shadow](Pr2) {\reflectbox{$\mathcal{P}_2$}};
\|
\node [blckblock, fill = white, drop shadow](K2) {$\mathcal{K}_2$};
\\
\node [var,draw = none](w3) {$w_3$};
\|
\node [block, fill = white, drop shadow](P3) {$\mathcal{P}_3$};
\|\|
\node [blckblock, fill = white, drop shadow](Pr3) {\reflectbox{$\mathcal{P}_3$}};
\|
\node [blckblock, fill = white, drop shadow](K3) {$\mathcal{K}_3$};
\\
};

\node [left = of P1.150] (w11){}; \node [left = of P1.180] (w12){}; \node [left = of P1.210] (w13){};

\node [left = of P2.150] (w21){}; \node [left = of P2.180] (w22){}; \node [left = of P2.210] (w23){};

\node [left = of P3.150] (w31){}; \node [left = of P3.180] (w32){}; \node [left = of P3.210] (w33){};

\node [right = of P1.30] (c11){}; \node [right = of P1.0] (c12){}; \node [right = of P1.330] (c13){};

\node [right = of P2.30] (c21){}; \node [right = of P2.0] (c22){}; \node [right = of P2.330] (c23){};

\node [right = of P3.30] (c31){}; \node [right = of P3.0] (c32){}; \node [right = of P3.330] (c33){};

\draw[-] (P1.270) --(P2.90); \draw[-] (P2.135)  -- node[]{$s_{12}$} (P1.225);
\draw[-] (P1.315) -- (P2.45);

\draw[-] (P2.270) -- (P3.90); \draw[-] (P3.135) -- node[]{$s_{23}$} (P2.225);
\draw[-] (P2.315) -- (P3.45);

\draw[-] (P1.150) -- (w11); \draw[-] (P1.180) -- (w12); \draw[-] (P1.210) -- (w13);
\draw[-] (P2.150) -- (w21); \draw[-] (P2.180) -- (w22); \draw[-] (P2.210) -- (w23);
\draw[-] (P3.150) -- (w31); \draw[-] (P3.180) -- (w32); \draw[-] (P3.210) -- (w33);

\draw[-] (P1.30) -- (c11); \draw[-] (P1.0) -- (c12); \draw[-] (P1.330) -- (c13);
\draw[-] (P2.30) -- (c21); \draw[-] (P2.0) -- (c22); \draw[-] (P2.330) -- (c23);
\draw[-] (P3.30) -- (c31); \draw[-] (P3.0) -- (c32); \draw[-] (P3.330) -- (c33);

\node [left = of Pr1.150] (c11){}; \node [left = of Pr1.180] (c12){}; \node [left = of Pr1.210] (c13){};

\node [left = of Pr2.150] (c21){}; \node [left = of Pr2.180] (c22){}; \node [left = of Pr2.210] (c23){};

\node [left = of Pr3.150] (c31){}; \node [left = of Pr3.180] (c32){}; \node [left = of Pr3.210] (c33){};

{\color{black}
\draw[-] (Pr1.270) --(Pr2.90); \draw[-] (Pr2.135)  -- (Pr1.225);
\draw[-] (Pr1.315) -- node[]{$s_{12}$} (Pr2.45);

\draw[-] (Pr2.270) -- (Pr3.90); \draw[-] (Pr3.135) -- (Pr2.225);
\draw[-] (Pr2.315) -- node[]{$s_{23}$} (Pr3.45);

\draw[-] (K1.270) --(K2.90); \draw[-] (K2.135)  -- (K1.225);
\draw[-] (K1.315) -- node[]{$k_{12}$} (K2.45);

\draw[-] (K2.270) -- (K3.90); \draw[-] (K3.135) -- (K2.225);
\draw[-] (K2.315) -- node[]{$k_{23}$} (K3.45);

\node [left = of K1.150] (wr11){}; \node [left = of K1.180] (wr12){}; \node [left = of K1.210] (wr13){};

\node [left = of K2.150] (wr21){}; \node [left = of K2.180] (wr22){}; \node [left = of K2.210] (wr23){};

\node [left = of K3.150] (wr31){}; \node [left = of K3.180] (wr32){}; \node [left = of K3.210] (wr33){};
\draw[-] (K1.150) -- (wr11); \draw[-] (K1.180) -- (wr12); \draw[-] (K1.210) -- (wr13);
\draw[-] (K2.150) -- (wr21); \draw[-] (K2.180) -- (wr22); \draw[-] (K2.210) -- (wr23);
\draw[-] (K3.150) -- (wr31); \draw[-] (K3.180) -- (wr32); \draw[-] (K3.210) -- (wr33);

\draw[-] (P1.30) -- node[]{$c_1$} (Pr1.150); \draw[-] (Pr1.180) -- (P1.0); \draw[-] (Pr1.210) -- (P1.330);
\draw[-] (P2.30) -- node[]{$c_2$} (Pr2.150); \draw[-] (Pr2.180) -- (P2.0); \draw[-] (Pr2.210) -- (P2.330);
\draw[-] (P3.30) -- node[]{$c_3$} (Pr3.150); \draw[-] (Pr3.180) -- (P3.0); \draw[-] (Pr3.210) -- (P3.330);

\node[outerblock, fill opacity =0, fit=(Pr1) (K1) ] (C1) {};
\node [right =.5em of C1.0] {$\mathcal{C}_1^\text{can}$};
\node[outerblock, fill opacity =0, fit=(Pr2) (K2) ] (C2) {};
\node [right =.5em of C2.0] {$\mathcal{C}_2^\text{can}$};
\node[outerblock, fill opacity =0, fit=(Pr3) (K3) ] (C3) {};
\node [right =.5em of C3.0] {$\mathcal{C}_3^\text{can}$};
}

\end{tikzpicture}
\caption{Controlled interconnection with local canonical controllers.}
\label{fig:distrpkscheme}
\end{figure}

\begin{remark}
The relevance of the developed canonical distributed controller reaches beyond the behavioral framework. An instance of the canonical distributed controller has been utilized for data-driven distributed control in~\citep{steentjes2020}, \citep{steentjes2021ecc} and \citep{steentjes2021sysid}, as a generalization of the ideal controller in model-reference control~\citep{campi2002}, \citep{bazanella2011}. The canonical distributed controller presented here is {\color{black}a} generalization of the ideal distributed controller in~\citep{steentjes2020} in the sense that it is representation free and does not distinguish between inputs and outputs of the interconnected system, both for the inter-plant connecting variables, as well as the control variables and inter-controller connecting variables. It is of future interest to determine how the canonical distributed controller can be further utilized in data-driven control, especially within the data-informativity framework~\citep{waarde2020tac}, \citep{vanwaarde2020noisytac} and e.g. data-driven distributed predictive control~\citep{kohler2022}, which currently relies on artificial state-space representations.
\end{remark}

{\color{black}
Implementability of a desired interconnected system behavior by the canonical distributed controller is clearly sufficient for implementability via distributed control as defined in Definition~\ref{def:distrimpl}. It is also necessary, by the following corollary. 
\begin{corollary} \label{cor:distrbeh}
 The desired interconnected system behavior $\mathcal{K}_\mathcal{I}$ is implementable via distributed control if and only if the canonical distributed controller $\mathcal{C}_\mathcal{I}^\text{can}$ implements $\mathcal{K}_\mathcal{I}$.
 \begin{proof}
 $(\Leftarrow)$ If the canonical distributed controller $\mathcal{C}_\mathcal{I}^\text{can}$ implements $\mathcal{K}_\mathcal{I}$, then \eqref{eq:Knimplementable} holds true for $\mathcal{C}_i=\mathcal{C}_i^\text{can}$, $i\in\mathbb{Z}_{[1:L]}$, and hence $\mathcal{K}_\mathcal{I}$ is implementable via distributed control.
 
$(\Rightarrow)$ Let $\mathcal{K}_\mathcal{I}$ be implementable via distributed control. Then there exist controllers $\mathcal{C}_i$, $i\in\mathbb{Z}_{[1:L]}$, such that $(\mathcal{K}_\mathcal{I})_w=\left(\mathcal{P}_\mathcal{I} \wedge_c [\wedge_{p_i,i\in\mathbb{Z}_{[1:L]}} \mathcal{C}_i]\right)_w$.
Following an analogous reasoning as in the necessity part of the proof for Proposition~\ref{prop:distrcan}, we have by definition that
\begin{align*}
    (\mathcal{K}_\mathcal{I})_w =\{w \,|\, \exists c\in (\mathcal{C}_\mathcal{I})_c \text{ so that } (w,c)\in(\mathcal{P}_\mathcal{I})_{(w,c)}\},
\end{align*}
which implies $(\mathcal{K}_\mathcal{I})_w\subseteq (\mathcal{P}_\mathcal{I})_w$. Indeed, by definition, the manifest plant behavior with respect to the variable $w$ is given by
\begin{align*}
    (\mathcal{P}_\mathcal{I})_w=\{w\,|\, \exists c\in\mathfrak{C}^\infty(\mathbb{R},\mathbb{R}^\mathrm{c})\text{ so that } (w,c)\in(\mathcal{P}_\mathcal{I})_{(w,c)}\}.
\end{align*}
Hence, it follows that $(\mathcal{K}_\mathcal{I})_w\subseteq (\mathcal{P}_\mathcal{I})_w$. Further, the hidden behavior of $\mathcal{P}_\mathcal{I}$ is $\mathcal{N}(\mathcal{P}_\mathcal{I})=\{w\,|\,(w,0)\in(\mathcal{P}_\mathcal{I})_{(w,c)}\}$, which implies that $\mathcal{N}(\mathcal{P}_\mathcal{I})\subseteq (\mathcal{K}_\mathcal{I})_w$. Therefore, the canonical distributed controller $\mathcal{C}_\mathcal{I}^\text{can}$ implements $\mathcal{K}_\mathcal{I}$ by Proposition~\ref{prop:distrcan}.
 \end{proof}
\end{corollary}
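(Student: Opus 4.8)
The plan is to reduce the equivalence to the characterization already obtained in Proposition~\ref{prop:distrcan}, which states that the canonical distributed controller implements $\mathcal{K}_\mathcal{I}$ precisely when the sandwich condition $\mathcal{N}(\mathcal{P}_\mathcal{I})\subseteq(\mathcal{K}_\mathcal{I})_w\subseteq(\mathcal{P}_\mathcal{I})_w$ holds. It therefore suffices to show that $\mathcal{K}_\mathcal{I}$ is implementable via distributed control, in the sense of Definition~\ref{def:distrimpl}, if and only if this same sandwich condition is satisfied. The $(\Leftarrow)$ direction is then immediate, and the substantive content lies in showing that the existence of \emph{any} admissible distributed controller already forces the sandwich condition, which is precisely the condition under which the canonical distributed controller succeeds.

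For the easy direction, I would suppose that $\mathcal{C}_\mathcal{I}^\text{can}$ implements $\mathcal{K}_\mathcal{I}$. Then taking $\mathcal{C}_i=\mathcal{C}_i^\text{can}$, $i\in\mathbb{Z}_{[1:L]}$, furnishes a family of controllers for which~\eqref{eq:Knimplementable} holds. One only needs to remark that $\mathcal{C}_\mathcal{I}^\text{can}=\wedge_{(s_i,k_i)}\mathcal{C}_i^\text{can}$ respects the required distributed interconnection structure by construction, so the $\mathcal{C}_i^\text{can}$ are admissible witnesses in Definition~\ref{def:distrimpl}, and hence $\mathcal{K}_\mathcal{I}$ is implementable via distributed control.

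For the converse, I would assume that some admissible distributed controller, with interconnected behavior $\mathcal{C}_\mathcal{I}=\wedge_{p_i}\mathcal{C}_i$, satisfies $(\mathcal{K}_\mathcal{I})_w=(\mathcal{P}_\mathcal{I}\wedge_c\mathcal{C}_\mathcal{I})_w$. Writing this manifest behavior explicitly as $\{w\mid\exists\,c\in(\mathcal{C}_\mathcal{I})_c\text{ so that }(w,c)\in(\mathcal{P}_\mathcal{I})_{(w,c)}\}$, the upper inclusion $(\mathcal{K}_\mathcal{I})_w\subseteq(\mathcal{P}_\mathcal{I})_w$ follows at once, since $(\mathcal{P}_\mathcal{I})_w$ requires only the existence of \emph{some} compatible $c$ and therefore contains the set above. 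For the lower inclusion $\mathcal{N}(\mathcal{P}_\mathcal{I})\subseteq(\mathcal{K}_\mathcal{I})_w$, I would use that $\mathcal{C}_\mathcal{I}$ is a linear differential behavior and hence contains the zero trajectory, so that $0\in(\mathcal{C}_\mathcal{I})_c$; consequently every $w$ with $(w,0)\in(\mathcal{P}_\mathcal{I})_{(w,c)}$ --- that is, every $w\in\mathcal{N}(\mathcal{P}_\mathcal{I})$ --- lies in the controlled manifest behavior, hence in $(\mathcal{K}_\mathcal{I})_w$. With the sandwich condition established, Proposition~\ref{prop:distrcan} immediately yields that $\mathcal{C}_\mathcal{I}^\text{can}$ implements $\mathcal{K}_\mathcal{I}$.

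The argument is short precisely because Proposition~\ref{prop:distrcan} does the heavy lifting; the step I expect to require the most care is the lower inclusion, where one must justify that $c=0$ is always an admissible control --- equivalently, that passing to the manifest behavior $(\mathcal{C}_\mathcal{I})_c$ preserves membership of the zero trajectory. The conceptual crux worth emphasizing is that the structural constraints imposed on a distributed controller do not shrink the class of implementable behaviors beyond the centralized sandwich condition: necessity of the sandwich for \emph{any} distributed implementation coincides with its sufficiency for the \emph{canonical} distributed controller, exactly mirroring the centralized implementability theory of Theorem~\ref{thm:sandwich}.
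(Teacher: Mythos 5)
Your proposal is correct and follows essentially the same route as the paper's own proof: the easy direction takes the local canonical controllers as witnesses in Definition~\ref{def:distrimpl}, and the converse extracts the sandwich condition $\mathcal{N}(\mathcal{P}_\mathcal{I})\subseteq(\mathcal{K}_\mathcal{I})_w\subseteq(\mathcal{P}_\mathcal{I})_w$ from an arbitrary implementing distributed controller and then invokes Proposition~\ref{prop:distrcan}. In fact you are slightly more explicit than the paper on two points it leaves implicit --- that $0\in(\mathcal{C}_\mathcal{I})_c$ justifies the lower inclusion, and that the $\mathcal{C}_i^\text{can}$ respect the required interconnection structure --- which is a welcome tightening rather than a deviation.
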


\begin{remark}
 The controller implementability conditions for a ``single'' system in Theorem~\ref{thm:sandwich} now follow as a special case from Proposition~\ref{prop:distrcan} and Corollary~\ref{cor:distrbeh}. Indeed, for a single system without inter-plant connection variables, i.e., $L=1$, $\mathtt{s}_1=0$ and $\mathtt{k}_1=0$, we have that $\mathcal{C}_\mathcal{I}^\text{can}=\mathcal{C}_1^\text{can}$ implements the desired behavior $\mathcal{K}_\mathcal{I}=\mathcal{K}_1$ if and only if $\mathcal{N}(\mathcal{P}_1)\subseteq \mathcal{K}_1\subseteq (\mathcal{P}_1)_w$, where we used $(\mathcal{K}_1)_w=\mathcal{K}_1$. Thus $\mathcal{K}_1$ is implementable by a controller $\mathcal{C}_1$ if and only if $\mathcal{N}(\mathcal{P}_1)\subseteq \mathcal{K}_1\subseteq (\mathcal{P}_1)_w$.
\end{remark}
}

\begin{figure}
    \centering
\begin{tikzpicture}[every node/.style={draw,outer sep=0pt,thick}]
\tikzstyle{spring}=[thick,decorate,decoration={zigzag,pre length=0.3cm,post length=0.3cm,segment length=5}]
\tikzstyle{damper}=[thick,decoration={markings,  
  mark connection node=dmp,
  mark=at position 0.5 with 
  {
    \node (dmp) [thick,inner sep=0pt,transform shape,rotate=-90,minimum width=15pt,minimum height=3pt,draw=none] {};
    \draw [thick] ($(dmp.north east)+(2pt,0)$) -- (dmp.south east) -- (dmp.south west) -- ($(dmp.north west)+(2pt,0)$);
    \draw [thick] ($(dmp.north)+(0,-5pt)$) -- ($(dmp.north)+(0,5pt)$);
  }
}, decorate]
\tikzstyle{ground}=[fill,pattern=north east lines,draw=none,minimum width=0.3cm,minimum height=0.75cm,pattern color=black]

\begin{scope}[xshift=7cm]
\node (M1) [minimum width=1cm, minimum height=1.25cm, fill=white] {$m_1$};
\node (M2) [minimum width=1cm, minimum height=1.25cm,right=of M1, xshift = 0.75cm,fill=white] {$m_2$};

\node (u1) [draw=none,right =of M1.45]{};
\node (u2) [draw=none,right =of M2.45]{};

\node (w1) [draw=none,right =of M1.90, yshift=0.5cm]{};
\node (w2) [draw=none,right =of M2.90, yshift=0.5cm]{};

\draw[->,very thick] (M1.45) -- node()[draw=none,yshift=0.25cm,xshift=0cm]{$d_1$}(u1);
\draw[->,very thick] (M2.45) -- node()[draw=none,yshift=0.25cm,xshift=0cm]{$d_2$}(u2);

\draw [thick] (M1.north) +(0cm,0.25cm) -- +(0cm,0.75cm);
\draw [->,thick] (M1.north) ++(0cm, 0.5cm) -- node()[draw=none,yshift=0.25cm]{$x_1$} +(1cm,0cm) (w1);

\draw [thick] (M2.north) +(0cm,0.25cm) -- +(0cm,0.75cm);
\draw [->,thick] (M2.north) ++(0cm, 0.5cm) -- node()[draw=none,yshift=0.25cm]{$x_2$} +(1cm,0cm) (w2);

\node (ground1) [ground,anchor=east,xshift=-1cm,minimum height=2.5cm,yshift=-1cm] at (M1.180) {};

\draw (ground1.north east) -- (ground1.south east);

\node (ground2) [ground,anchor=west,xshift=1cm,minimum height=2.5cm,yshift=-1cm] at (M2.0) {};

\draw (ground2.north west) -- (ground2.south west);

\draw [spring] (M1.0) -- ($(M2.north west)!(M1.0)!(M2.south west)$);
\draw [spring] (M1.180) -- ($(ground1.north east)!(M1.180)!(ground1.south east)$);
\draw [spring] (M2.0) -- ($(ground2.north west)!(M2.0)!(ground2.south west)$);

{\color{orange!85!black}
\node (M1c) [minimum width=1cm, minimum height=1cm, fill=white, below= of M1,yshift=0.75cm] {$m_1^c$};
\node (M2c) [minimum width=1cm, minimum height=1cm,right=of M1c, xshift = 0.75cm,fill=white] {$m_2^c$};

\draw [spring] (M1c.0) -- ($(M2c.north west)!(M1c.0)!(M2c.south west)$);

\draw [damper] (M1c.180) -- ($(ground1.north east)!(M1c.180)!(ground1.south east)$);
\draw [damper] (M2c.0) -- ($(ground2.north west)!(M2c.0)!(ground2.south west)$);

\draw [thick] (M1.south) -- (M1c.north);
\draw [thick] (M1.250) -- ($(M1c.north east)!(M1.250)!(M1c.north west)$);
\draw [thick] (M1.290) -- ($(M1c.north east)!(M1.290)!(M1c.north west)$);

\draw [thick] (M2.south) -- (M2c.north);
\draw [thick] (M2.250) -- ($(M2c.north east)!(M2.250)!(M2c.north west)$);
\draw [thick] (M2.290) -- ($(M2c.north east)!(M2.290)!(M2c.north west)$);
}

\end{scope}
\end{tikzpicture}
\caption{\color{black}Interconnected mass-spring system example (black) and a physical interpretation of the canonical distributed controller $\mathcal{C}_\mathcal{I}^\text{can}$ (orange).}
    \label{fig:physexample}
\end{figure}
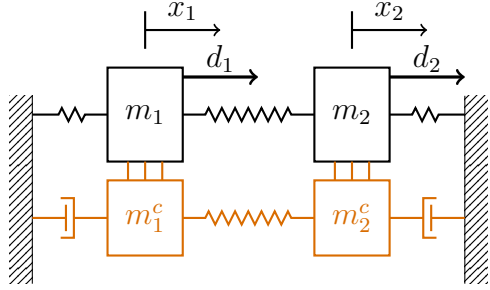

{\color{black}\subsection{Example: interconnected mass-spring system}
Consider an interconnected mass-spring system, with unity masses and spring constants, described by
\begin{align*}
    \frac{\dd^2}{\dd t^2}x_1+(x_1-x_2)+x_1&=f_1+d_1,\\
    \frac{\dd^2}{\dd t^2}x_2+(x_2-x_1)+x_2&=f_2+d_2,
\end{align*}
where $x_i$, $i=1,2,$ are the mass positions and $f_i$, $d_i$ are external forces acting on the masses. We describe the interconnected system by behaviors $\mathcal{P}_1$, $\mathcal{P}_2$ with variables $w_i:=\operatorname{col}(x_i,d_i)$, $c_i:=\operatorname{col}(x_i,f_i)$ and $s=\operatorname{col}(x_1,x_2)$, admitting kernel representations $P_i(\frac{\dd }{\dd t})\operatorname{col}(w_i,s,c_i)=0$, with
\begin{align*}
    P_1(\xi)&:=\begin{bmatrix}
        \xi^2+2 & -1 & 0 & -1 & 0 & -1\\ -1 & 0 & 1 & 0 & 0 & 0\\ -1 & 0 & 0 & 0 & 1 & 0
    \end{bmatrix}\quad \text{ and }\\
    P_2(\xi)&:=\begin{bmatrix}
        \xi^2+2 & -1 & -1 & 0 & 0 & -1\\ -1 & 0 & 0 & 1 & 0 & 0\\ -1 & 0 & 0 & 0 & 1 & 0
    \end{bmatrix}.
\end{align*}

For the desired interconnected system, we wish that the controlled behavior represents a system with increased mass, as well as damping between masses and ground, and a higher stiffness between the masses. Specifically, the desired behavior is described by the differential equations
\begin{align*}
    2\frac{\dd^2}{\dd t^2}x_1+2(x_1-x_2)+\frac{\dd }{\dd t}x_1+x_1&=d_1,\\
    2\frac{\dd^2}{\dd t^2}x_2+2(x_2-x_1)+\frac{\dd }{\dd t}x_2+x_2&=d_2.
\end{align*}
The desired interconnected behavior is the interconnection of $\mathcal{K}_1$, $\mathcal{K}_2$, through $k:=\operatorname{col}(x_1,x_2)$, admitting kernel representations $K_i(\frac{\dd }{\dd t})\operatorname{col}(w_i,k)=0$, with
\begin{align*}
    K_1(\xi)&:=\begin{bmatrix}
        2\xi^2+\xi+3 & -1 & 0 & -2\\ -1 & 0 & 1 & 0
    \end{bmatrix}\quad \text{ and }\\
    K_2(\xi)&:=\begin{bmatrix}
        2\xi^2+\xi+3 & -1 & -2 & 0\\ -1 & 0 & 0 & 1
    \end{bmatrix}.
\end{align*}
The desired interconnected system satisfies the condition in Proposition~\ref{prop:distrcan}, thus the canonical distributed controller implements the desired behavior. The local canonical controllers in \eqref{eq:localcontr} admit the kernel representations $C_i^\text{can}(\frac{\dd}{\dd t})\operatorname{col}(c_i,s,k)=0$, with
\begin{align*}
    C_1^\text{can}(\xi)&:=\begin{bmatrix}
        \xi^2+\xi+1 & 1 & 0 & 1 & 0 & -2\\
        0 & 0 & -1 & 0 & 1 & 0\\
        1 & 0 & -1 & 0 & 0 & 0
    \end{bmatrix}\quad \text{ and }\\
    C_2^\text{can}(\xi)&:=\begin{bmatrix}
        \xi^2+\xi+1 & 1 & 1 & 0 & -2 & 0\\
        0 & 0 & 0 & -1 & 0 & 1\\
        1 & 0 & 0 & -1 & 0 & 0
    \end{bmatrix}.
\end{align*}
From the interconnection $\mathcal{C}_\mathcal{I}^\text{can}= \mathcal{C}_1^\text{can}\wedge_{(s,k)}\mathcal{C}_2^\text{can}$, we observe that $k=s$, $\begin{bmatrix}
    1 & 0
\end{bmatrix}c_1=\begin{bmatrix}
    1 & 0
\end{bmatrix}k$, $\begin{bmatrix}
    1 & 0
\end{bmatrix}c_2=\begin{bmatrix}
    0 & 1
\end{bmatrix}k$ and 
\begin{align}
    \frac{\dd^2}{\dd t^2}[c_1]_1+\frac{\dd}{\dd t}[c_1]_1+[c_1]_1+[c_1]_2+\begin{bmatrix}
        0 & -1
    \end{bmatrix}k&=0, \label{xmpl:1}\\ 
    \frac{\dd^2}{\dd t^2}[c_2]_1+\frac{\dd}{\dd t}[c_2]_1+[c_2]_1+[c_2]_2+\begin{bmatrix}
        -1 & 0
    \end{bmatrix}k&=0. \label{xmpl:2}
\end{align}
The canonical distributed controller behavior thus has a clear physical interpretation. Indeed, the distributed controller can be represented by a mass-spring-damper system with unity masses, unity damping coefficients and a spring between the masses with a unity stiffness. The physical interpretation of the canonical distributed controller and its interconnection with the plants is visualized in Figure~\ref{fig:physexample}.

A remark on the necessity of the interconnection between the local canonical controllers: In the case that the stiffness interconnecting the masses is required to be the same for the plant and desired interconnected system, then equations \eqref{xmpl:1} and \eqref{xmpl:2} become decoupled in the sense that there is no dependency on $k$, i.e., the last term in \eqref{xmpl:1} and \eqref{xmpl:2} becomes $\begin{bmatrix}
    0 & 0
\end{bmatrix}k$. Therefore, the orange spring in the physical interpretation in Figure~\ref{fig:physexample} is not present and the local canonical controllers are thus not necessarily coupled.
}

\section{Regularity of the canonical distributed controller}
An important type of system interconnections is a \emph{regular interconnection}, introduced by Willems in~\citep{willems97}. Formally, a regular interconnection of two systems is defined as follows.
\begin{definition} \label{def:ch5reg}
Consider two behaviors $\mathfrak{B}_1\in\mathfrak{L}^{\mathtt{w}_1+\mathtt{w}_2}$ and $\mathfrak{B}_2\in\mathfrak{L}^{\mathtt{w}_2+\mathtt{w}_3}$. The interconnection of $\mathfrak{B}_1$ and $\mathfrak{B}_2$ is said to be regular if $\mathtt{p}(\mathfrak{B}_1\wedge_{w_2} \mathfrak{B}_2)=\mathtt{p}(\mathfrak{B}_1)+\mathtt{p}(\mathfrak{B}_2)$, where $\mathfrak{B}_1\wedge_{w_2} \mathfrak{B}_2=\{(w_1,w_2,w_3)\,|\, (w_1,w_2)\in\mathfrak{B}_1 \text{ and } (w_2,w_3)\in\mathfrak{B}_2\}$.
\end{definition}
Regularity of the interconnection of two systems has multiple interpretations. First, regularity means in a sense that the equations describing the dynamics of $\mathfrak{B}_1$ and $\mathfrak{B}_2$ are independent of each other~\citep{willems2003cdc}. For the second interpretation, consider a plant $\mathcal{P}\in\mathfrak{L}^{\mathtt{w}+\mathtt{c}}$, a controller $\mathcal{C}\in\mathfrak{L}^\mathtt{c}$ and their interconnection $\mathcal{K}:=\{(w,c)\in\mathcal{P}\,|\, c\in\mathcal{C}\}$. According to Definition~\ref{def:ch5reg}, the plant-controller interconnection is regular if
\begin{align*}
\mathtt{p}(\mathcal{K})=\mathtt{p}(\mathcal{P})+\mathtt{p}(\mathcal{C}).
\end{align*}
This interconnection is regular if and only if the controller $\mathcal{C}$ can be realized as a transfer function from an output variable to an input variable of $\mathcal{P}$ for an input/output partitioning of the control variable $c$~\citep{willems2003cdc}. From a control-point-of-view, regularity of the plant-controller interconnection therefore means that the controller acts as a feedback controller, i.e., it can process sensor outputs to actuator inputs. {\color{black}Notice that this typical assumption in classical and modern control theory is not assumed \emph{a priori} in control in a behavioral setting~\citep{willems97}, \citep{willems2003cdc}, \citep{julius2005}. A special type of regular interconnections is a regular feedback interconnection for which, in addition to being regular, the sum of the McMillan degrees of $\mathcal{P}$ and $\mathcal{C}$ is equal to the McMillan degree of the interconnection of $\mathcal{P}$ and $\mathcal{C}$~\citep[Definition~2.5]{vinjamoorbelur2010}. In practice, regular feedback interconnections avoid `impulsive' behavior when two systems are interconnected, such as sparks in electrical switching and jerky behavior in mechanical interconnections~\citep{vinjamoorbelur2010}.}

Let us now consider regularity of the interconnections related to the canonical distributed controller, which was introduced in Section~\ref{sec:ch5distrcan}. There are two types of interconnections that are of interest: (i) the interconnection between the canonical distributed  controller {\color{black}$\mathcal{C}_\mathcal{I}^\text{can}$} and the interconnected system $\mathcal{P}_\mathcal{I}$, i.e., the plant-controller interconnection and (ii) the interconnection between  $\mathcal{C}_i^\text{can}$ and $\mathcal{C}_j^\text{can}$, $(i,j)\in\mathbb{Z}_{[1:L]}^2$ and $i\neq j$, i.e., the interconnection of local controllers. The interpretation of regularity of the plant-controller interconnection has been considered in the previous paragraph. Regularity of the interconnection of local canonical controllers can be interpreted as follows. If the interconnection between controllers is regular, then the interconnection variable $p_{ij}$ can always be partitioned to achieve a regular feedback interconnection, i.e., such that the transfers from inputs in the partitioning to outputs are proper. Regularity of the interconnection between local controllers thus means that the controllers can communicate by processing received signals (input) into sent signals (output).

\subsection{Regularity of the plant-controller interconnection}
Regularity of the interconnection of the interconnected system behavior $\mathcal{P}_\mathcal{I}$ and a distributed controller $\mathcal{C}_\mathcal{I}$ follows from the regularity of the behaviors with the interconnection variables $(s_1,\dots,s_L)$ and $(p_1,\dots, p_L)$ eliminated, i.e., from $(\mathcal{P}_\mathcal{I})_{(w,c)}$ and $(\mathcal{C}_\mathcal{I})_c$. By definition, the interconnection of $(\mathcal{P}_\mathcal{I})_{(w,c)}$ and $(\mathcal{C}_\mathcal{I})_c$ is regular if
\begin{align} \label{eq:ch5cdregular}
\mathtt{p}((\mathcal{P}_\mathcal{I})_{(w,c)})+\mathtt{p}((\mathcal{C}_\mathcal{I})_{c})=\mathtt{p}((\mathcal{P}_\mathcal{I})_{(w,c)}\wedge_c (\mathcal{C}_\mathcal{I})_c).
\end{align}
If~\eqref{eq:ch5cdregular} holds, then the distributed controller is called regular with respect to the variable $c$. A sufficient condition for regularity with respect to the variable $c$  of all distributed controllers that implement $\mathcal{K}_\mathcal{I}$ follows from~\citep[Theorem~12]{julius2005}.

\begin{proposition}
Let $\mathcal{P}_i\in\mathfrak{L}^{\mathtt{w}_i+\mathtt{s}_i+\mathtt{c}_i}$ and $\mathcal{C}_i\in\mathfrak{L}^{\mathtt{c}_i+\mathtt{p}_i}$, $i\in\mathbb{Z}_{[1:L]}$, and consider the interconnected system $\mathcal{P}_\mathcal{I}=\wedge_{s_i,i\in\mathbb{Z}_{[1:L]}} \mathcal{P}_i$ and distributed controller $\mathcal{C}_\mathcal{I}=\wedge_{p_i,i\in\mathbb{Z}_{[1:L]}} \mathcal{C}_i$. Let $(\mathcal{K}_\mathcal{I})_w$ be the desired behavior, with $\mathcal{K}_\mathcal{I}=\wedge_{k_i,i\in\mathbb{Z}_{[1:L]}} \mathcal{K}_i$, where $\mathcal{K}_i\in\mathfrak{L}^{\mathtt{w}_i+\mathtt{k}_i}$, $i\in\mathbb{Z}_{[1:L]}$.

Every distributed controller $\mathcal{C}_\mathcal{I}$ that implements $\mathcal{K}_\mathcal{I}$, i.e., \eqref{eq:Knimplementable} holds, is regular with respect to the variable $c$ if $(\mathcal{P}_\mathcal{I})_c=\mathfrak{C}^\infty(\mathbb{R},\mathbb{R}^\mathtt{c})$, where $(\mathcal{P}_\mathcal{I})_c$ is the manifest behavior of the interconnected system with respect to the variable $c$, i.e.,
\begin{align*}
(\mathcal{P}_\mathcal{I})_c=\{c\,|\, \exists (w,s) \text{ so that } (w,s,c)\in\mathcal{P}_\mathcal{I}\}.
\end{align*}
\end{proposition}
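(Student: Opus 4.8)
The plan is to reduce the distributed regularity question to the \emph{centralized} plant--controller regularity result of~\citep[Theorem~12]{julius2005} by passing to the manifest behaviors in which all interconnection variables have already been eliminated. The first observation is that regularity with respect to $c$ is, by the defining identity~\eqref{eq:ch5cdregular}, a statement about the two ``lumped'' behaviors $(\mathcal{P}_\mathcal{I})_{(w,c)}$ and $(\mathcal{C}_\mathcal{I})_c$ only: the internal sharing variables $(s_i)$ and $(p_i)$ are gone, so the interconnection structure plays no role in the output-cardinality count. Hence it suffices to treat $(\mathcal{P}_\mathcal{I})_{(w,c)}\in\mathfrak{L}^{\mathtt{w}+\mathtt{c}}$ as a single plant with control variable $c$ and $(\mathcal{C}_\mathcal{I})_c\in\mathfrak{L}^{\mathtt{c}}$ as a single controller.

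Next I would transport the two hypotheses into this lumped picture. Using transitivity of elimination (the manifest behavior of a manifest behavior is again a manifest behavior), the freeness hypothesis becomes $\big((\mathcal{P}_\mathcal{I})_{(w,c)}\big)_c=(\mathcal{P}_\mathcal{I})_c=\mathfrak{C}^\infty(\mathbb{R},\mathbb{R}^{\mathtt{c}})$, i.e.\ $c$ is free in the lumped plant. Likewise, if $\mathcal{C}_\mathcal{I}$ implements $\mathcal{K}_\mathcal{I}$ in the sense of Definition~\ref{def:distrimpl}, then by associativity of the interconnection and the elimination theorem $(\mathcal{K}_\mathcal{I})_w=\big((\mathcal{P}_\mathcal{I})_{(w,c)}\wedge_c(\mathcal{C}_\mathcal{I})_c\big)_w$, so that $(\mathcal{C}_\mathcal{I})_c$ is an ordinary controller implementing the behavior $(\mathcal{K}_\mathcal{I})_w$ for the plant $(\mathcal{P}_\mathcal{I})_{(w,c)}$. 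With these identifications the claim is exactly the centralized statement: when the control variable is free in the plant, every controller implementing an implementable behavior does so regularly, which is the content of~\citep[Theorem~12]{julius2005}.

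To make the mechanism self-contained at the representation level, I would take minimal kernel representations $[\,R\ M\,]\operatorname{col}(w,c)=0$ of $(\mathcal{P}_\mathcal{I})_{(w,c)}$ and $C(\tfrac{\dd}{\dd t})c=0$ of $(\mathcal{C}_\mathcal{I})_c$, so $\mathtt{p}((\mathcal{P}_\mathcal{I})_{(w,c)})=\operatorname{rank}[\,R\ M\,]$ and $\mathtt{p}((\mathcal{C}_\mathcal{I})_c)=\operatorname{rank}C$. The interconnection is represented by $\operatorname{col}([\,R\ M\,],[\,0\ C\,])$, and~\eqref{eq:ch5cdregular} says precisely that its rank equals the sum of the two ranks. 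The key lemma is that $c$ is free in the plant if and only if $R$ itself has full row rank: eliminating $w$ with a maximal left annihilator $N$ of $R$ shows $(\mathcal{P}_\mathcal{I})_c=\mathfrak{C}^\infty$ forces $NM=0$, hence $N[\,R\ M\,]=0$, which combined with minimality of $[\,R\ M\,]$ gives $\operatorname{rank}R=\operatorname{rank}[\,R\ M\,]$ and thus full row rank of $R$. Once $R$ has full row rank, any rational left annihilation $a[\,R\ M\,]+b[\,0\ C\,]=0$ yields $aR=0\Rightarrow a=0$ and then $bC=0\Rightarrow b=0$, so the two blocks of rows are independent, the rank adds, and~\eqref{eq:ch5cdregular} holds.

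The step I expect to be the main obstacle is the bookkeeping in the second paragraph: verifying that ``regular with respect to $c$'' for the distributed controller coincides exactly with centralized regularity of the lumped pair, and that elimination of the interconnection variables commutes so that the freeness and implementation hypotheses transfer without loss. Once that identification is secured, the appeal to~\citep[Theorem~12]{julius2005} — or, equivalently, the short rank computation above — is routine.
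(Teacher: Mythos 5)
Your proposal is correct and follows essentially the same route as the paper's proof: pass to the lumped manifest behaviors $(\mathcal{P}_\mathcal{I})_{(w,c)}$ and $(\mathcal{C}_\mathcal{I})_c$, take minimal kernel representations $R\left(\frac{\dd}{\dd t}\right)w+M\left(\frac{\dd}{\dd t}\right)c=0$ and $C\left(\frac{\dd}{\dd t}\right)c=0$, deduce full row rank of $R$ from $(\mathcal{P}_\mathcal{I})_c=\mathfrak{C}^\infty(\mathbb{R},\mathbb{R}^{\mathtt{c}})$, and conclude regularity from full row rank of the stacked kernel representation of the controlled behavior. If anything, your annihilator arguments supply explicit justification for the two steps (full row rank of $R$; minimality of the stacked representation) that the paper asserts without proof.
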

\begin{proof}
First, notice that $\mathcal{P}_\mathcal{I}=\wedge_{s_i,i\in\mathbb{Z}_{[1:L]}} \mathcal{P}_i\in\mathfrak{L}^{\mathtt{w}+\mathtt{s}+\mathtt{c}}$ and that $(\mathcal{P}_\mathcal{I})_{(w,c)}\in\mathfrak{L}^{\mathtt{w}+\mathtt{c}}$. Hence, there exists a minimal kernel representation for $(\mathcal{P}_\mathcal{I})_{(w,c)}$:
\begin{align*}
R\left(\frac{\dd }{\dd t}\right)w+M\left(\frac{\dd }{\dd t}\right) c=0.
\end{align*}
Assume that $(\mathcal{P}_\mathcal{I})_c=\mathfrak{C}^\infty(\mathbb{R},\mathbb{R}^\mathtt{c})$. Then $R$ has full row rank. Now, take any distributed controller $\mathcal{C}_\mathcal{I}=\wedge_{p_i,i\in\mathbb{Z}_{[1:L]}} \mathcal{C}_i\in\mathfrak{L}^{\mathtt{c}+\mathtt{p}}$ that implements $\mathcal{K}_\mathcal{I}$. The manifest behavior of $\mathcal{C}_\mathcal{I}$ with respect to the variable $c$, i.e., $(\mathcal{C}_\mathcal{I})_c$, satisfies $(\mathcal{C}_\mathcal{I})_c\in\mathfrak{L}^{\mathtt{c}}$ and therefore has a minimal kernel representation $C\left(\frac{\dd }{\dd t}\right) c=0$. Since $R$ has full row rank, we find that 
\begin{align*}
\begin{bmatrix}
R\left(\frac{\dd }{\dd t}\right) & M\left(\frac{\dd }{\dd t}\right)\\ 0 & C\left(\frac{\dd }{\dd t}\right)
\end{bmatrix}\begin{bmatrix}
w\\ c
\end{bmatrix}=0
\end{align*}
is a minimal kernel representation of $(\mathcal{K}_\mathcal{I})_{(w,c)}$. We find that
\begin{align*}
\mathtt{p}((\mathcal{K}_\mathcal{I})_{(w,c)})=\operatorname{rank} R+\operatorname{rank} C=\mathtt{p}((\mathcal{P}_\mathcal{I})_{(w,c)})+\mathtt{p}((\mathcal{C}_\mathcal{I})_{c}),
\end{align*}
which was to be proven.
\end{proof}

\begin{corollary}
Consider an interconnected system $\mathcal{P}_\mathcal{I}=\wedge_{s_i,i\in\mathbb{Z}_{[1:L]}} \mathcal{P}_i$, $\mathcal{P}_i\in\mathfrak{L}^{\mathtt{w}_i+\mathtt{s}_i+\mathtt{c}_i}$, and the desired behavior $(\mathcal{K}_\mathcal{I})_w$, with $\mathcal{K}_\mathcal{I}=\wedge_{k_i,i\in\mathbb{Z}_{[1:L]}} \mathcal{K}_i$, $\mathcal{K}_i\in\mathfrak{L}^{\mathtt{w}_i+\mathtt{k}_i}$, $i\in\mathbb{Z}_{[1:L]}$. Assume that
\begin{align*}
\mathcal{N}(\mathcal{P}_\mathcal{I})\subseteq (\mathcal{K}_\mathcal{I})_w\subseteq (\mathcal{P}_\mathcal{I})_w.
\end{align*}
If $(\mathcal{P}_\mathcal{I})_c=\mathfrak{C}^\infty(\mathbb{R},\mathbb{R}^\mathtt{c})$, then the canonical distributed controller implements $\mathcal{K}_\mathcal{I}$ and is regular with respect to the variable $c$.
\end{corollary}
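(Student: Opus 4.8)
The plan is to obtain the corollary as an immediate consequence of the two results that precede it: Proposition~\ref{prop:distrcan}, which characterizes when the canonical distributed controller implements $\mathcal{K}_\mathcal{I}$, and the regularity proposition established immediately above, which guarantees that \emph{every} distributed controller implementing $\mathcal{K}_\mathcal{I}$ is regular with respect to $c$ provided $(\mathcal{P}_\mathcal{I})_c=\mathfrak{C}^\infty(\mathbb{R},\mathbb{R}^\mathtt{c})$. The two hypotheses of the corollary are precisely what is needed to trigger these results in sequence, so no fresh computation is required.

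First I would invoke the sandwich inclusion $\mathcal{N}(\mathcal{P}_\mathcal{I})\subseteq (\mathcal{K}_\mathcal{I})_w\subseteq (\mathcal{P}_\mathcal{I})_w$, which is assumed. By Proposition~\ref{prop:distrcan} this inclusion is exactly the necessary and sufficient condition for the canonical distributed controller $\mathcal{C}_\mathcal{I}^\text{can}$ to implement $\mathcal{K}_\mathcal{I}$. This already establishes the first half of the conclusion, namely that $\mathcal{C}_\mathcal{I}^\text{can}$ implements the desired behavior.

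Second, I would observe that $\mathcal{C}_\mathcal{I}^\text{can}=\wedge_{(s_i,k_i),i\in\mathbb{Z}_{[1:L]}}\mathcal{C}_i^\text{can}$ from~\eqref{eq:candistrcontr} is itself an admissible distributed controller in the sense of the regularity proposition: by~\eqref{eq:localcontr} and the elimination theorem each local subsystem satisfies $\mathcal{C}_i^\text{can}\in\mathfrak{L}^{\mathtt{c}_i+\mathtt{s}_i+\mathtt{k}_i}$, and its inter-controller connection variable plays the role of $p_i=(s_i,k_i)$, so that $\mathtt{p}_i=\mathtt{s}_i+\mathtt{k}_i$ and the interconnection pattern matches $\wedge_{p_i}$. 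Since step one shows that $\mathcal{C}_\mathcal{I}^\text{can}$ implements $\mathcal{K}_\mathcal{I}$, and the standing hypothesis $(\mathcal{P}_\mathcal{I})_c=\mathfrak{C}^\infty(\mathbb{R},\mathbb{R}^\mathtt{c})$ is in force, the regularity proposition applies directly and forces $\mathcal{C}_\mathcal{I}^\text{can}$ to be regular with respect to $c$. Combining the two steps yields both assertions of the corollary.

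I expect that the only point requiring attention, rather than a genuine obstacle, is the bookkeeping certifying that $\mathcal{C}_\mathcal{I}^\text{can}$ genuinely qualifies as an admissible distributed controller for the regularity proposition — that is, that its local behaviors lie in the correct $\mathfrak{L}$ classes and that its interconnection conforms to the $\wedge_{p_i}$ structure on which that proposition is stated. Because this is immediate from the construction in~\eqref{eq:localcontr}--\eqref{eq:candistrcontr}, the corollary follows without any additional analytic effort, serving essentially as a packaging of the preceding two results under their combined hypotheses.
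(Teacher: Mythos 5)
Your proposal is correct and matches the paper's own reasoning: the corollary is stated there without proof precisely because it follows, as you argue, by combining Proposition~\ref{prop:distrcan} (the sandwich condition gives implementation by $\mathcal{C}_\mathcal{I}^\text{can}$) with the preceding proposition (any implementing distributed controller is regular with respect to $c$ when $(\mathcal{P}_\mathcal{I})_c=\mathfrak{C}^\infty(\mathbb{R},\mathbb{R}^\mathtt{c})$). Your bookkeeping remark that $\mathcal{C}_\mathcal{I}^\text{can}$ qualifies as a distributed controller with $p_i=(s_i,k_i)$ is exactly the right point to check, and it holds by the construction in \eqref{eq:localcontr}--\eqref{eq:candistrcontr}.
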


\subsection{Regularity of the interconnection of local canonical controllers}
Let us now consider the regularity of the interconnection of local canonical controllers, i.e., the regularity of $\mathcal{C}_i^\text{can}\wedge_{(s_{ij},k_{ij})}\mathcal{C}_j^\text{can}$, $(i,j)\in\mathbb{Z}_{[1:L]}^2$ and $i\neq j$. Without loss of generality, we will consider that $L=2$ in this subsection. The interconnection of $\mathcal{C}_1^\text{can}$ and $\mathcal{C}_2^\text{can}$ is regular if
\begin{align*}
\mathtt{p}(\mathcal{C}_1^\text{can}\wedge_{(s,k)}\mathcal{C}_2^\text{can})=\mathtt{p}(\mathcal{C}_1^\text{can})+\mathtt{p}(\mathcal{C}_2^\text{can}).
\end{align*}

The behaviors $\mathcal{P}_i\in\mathfrak{L}^{\color{black}\mathtt{w}_i+\mathtt{s}+\mathtt{c}_i}$, $i=1,2$, admit kernel representations
\begin{align} \label{eq:ch5kerpi}
R_i\left(\frac{\dd}{\dd t}\right)w_i+S_i\left(\frac{\dd}{\dd t}\right)s+M_i\left(\frac{\dd}{\dd t}\right)c_i=0,\quad i=1,2.
\end{align}
Similarly, the behaviors $\mathcal{K}_i\in\mathfrak{L}^{\color{black}\mathtt{w}_i+\mathtt{k}}$, $i=1,2$, admit kernel representations
\begin{align} \label{eq:ch5kerki}
W_i\left(\frac{\dd}{\dd t}\right)w_i+K_i\left(\frac{\dd}{\dd t}\right)k=0,\quad i=1,2.
\end{align}
Define the partitioned matrix
\begin{align} \label{eq:ch5LNmat}
\left[\begin{array}{cccc|cc}
M_1 & 0 & S_1 & 0 & R_1 & 0\\ 0 & 0 & 0 & K_1 & W_1 & 0\\ \hline 0 & M_2 & S_2 & 0 & 0 & R_2\\ 0 & 0 & 0 & K_2 & 0 & W_2
\end{array}\right]=:\left[\begin{array}{c|c}
L_1 & N_1\\ \hline L_2 & N_2
\end{array}\right].
\end{align}

\begin{proposition} \label{prop:ch5regcican}
Consider the behaviors $\mathcal{P}_i\in\mathfrak{L}^{\color{black}\mathtt{w}_i+\mathtt{s}+\mathtt{c}_i}$ and $\mathcal{K}_i\in\mathfrak{L}^{\color{black}\mathtt{w}_i+\mathtt{k}}$, $i=1,2$, and the kernel representations~\eqref{eq:ch5kerpi} and \eqref{eq:ch5kerki}, respectively. The interconnection of $\mathcal{C}_1^\text{can}$ and $\mathcal{C}_2^\text{can}$ is regular if and only if
\begin{align} \label{eq:ch5propln}
\operatorname{rank}\begin{bmatrix}
L_1 & N_1
\end{bmatrix}+\operatorname{rank}\begin{bmatrix}
L_2 & N_2
\end{bmatrix}=\operatorname{rank}\begin{bmatrix}
L_1 & N_1\\ L_2 & N_2
\end{bmatrix}.
\end{align}
\end{proposition}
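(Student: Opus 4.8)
The plan is to translate the regularity condition, which is an equality of output cardinalities, into an equality of polynomial-matrix ranks, and then to exploit the block structure of the latent-variable columns in \eqref{eq:ch5LNmat}. The central tool is an output-cardinality formula for eliminated behaviors: if $\mathfrak{B}_w$ is obtained from a full behavior with kernel representation $P(\tfrac{\dd}{\dd t})w + Q(\tfrac{\dd}{\dd t})\ell = 0$ by eliminating the latent variable $\ell$, then
\[
\mathtt{p}(\mathfrak{B}_w) = \operatorname{rank}\begin{bmatrix} P & Q \end{bmatrix} - \operatorname{rank} Q .
\]
I would establish this by bringing $Q$ to the form $\operatorname{col}(Q_1,0)$ with $Q_1$ of full row rank via a unimodular row transformation $U$; by the elimination theorem~\citep[Theorem~6.2.6]{polderman98}, $\mathfrak{B}_w$ is then the kernel of the lower block $P_2$ of $UP=\operatorname{col}(P_1,P_2)$, so $\mathtt{p}(\mathfrak{B}_w)=\operatorname{rank}P_2$. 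Because $Q_1$ has full row rank, the rows of $\begin{bmatrix} P_1 & Q_1 \end{bmatrix}$ and of $\begin{bmatrix} P_2 & 0 \end{bmatrix}$ are independent over $\mathbb{R}(\xi)$, whence $\operatorname{rank}\begin{bmatrix} P & Q \end{bmatrix}=\operatorname{rank}Q_1+\operatorname{rank}P_2=\operatorname{rank}Q+\mathtt{p}(\mathfrak{B}_w)$.

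Next I would apply this formula three times. Reading off \eqref{eq:ch5kerpi}--\eqref{eq:ch5kerki}, the local controller $\mathcal{C}_i^\text{can}$ has manifest variables $(c_i,s,k)$ and latent variable $w_i$, with manifest-coefficient block $L_i$ and latent-coefficient block $N_i$ (the zero columns for $c_j$ and $w_j$, $j\neq i$, affect no rank), so that
\[
\mathtt{p}(\mathcal{C}_i^\text{can}) = \operatorname{rank}\begin{bmatrix} L_i & N_i \end{bmatrix} - \operatorname{rank} N_i, \qquad i=1,2.
\]
Likewise the interconnection $\mathcal{C}_1^\text{can}\wedge_{(s,k)}\mathcal{C}_2^\text{can}$ has manifest variables $(c_1,c_2,s,k)$ and latent variables $(w_1,w_2)$, with full kernel representation given precisely by \eqref{eq:ch5LNmat}, hence
\[
\mathtt{p}\!\left(\mathcal{C}_1^\text{can}\wedge_{(s,k)}\mathcal{C}_2^\text{can}\right) = \operatorname{rank}\begin{bmatrix} L_1 & N_1\\ L_2 & N_2 \end{bmatrix} - \operatorname{rank}\begin{bmatrix} N_1\\ N_2 \end{bmatrix}.
\]

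The decisive step is the observation that the stacked latent block has block-diagonal column support: since $N_1$ acts only on $w_1$ and $N_2$ only on $w_2$, the matrix $\operatorname{col}(N_1,N_2)$ reduces, after deleting zero columns, to $\operatorname{diag}\!\big(\operatorname{col}(R_1,W_1),\operatorname{col}(R_2,W_2)\big)$, so that $\operatorname{rank}\operatorname{col}(N_1,N_2)=\operatorname{rank}N_1+\operatorname{rank}N_2$. Substituting the three cardinalities into the regularity condition $\mathtt{p}(\mathcal{C}_1^\text{can}\wedge_{(s,k)}\mathcal{C}_2^\text{can})=\mathtt{p}(\mathcal{C}_1^\text{can})+\mathtt{p}(\mathcal{C}_2^\text{can})$, the terms $\operatorname{rank}N_1$ and $\operatorname{rank}N_2$ cancel against $\operatorname{rank}\operatorname{col}(N_1,N_2)$, leaving exactly \eqref{eq:ch5propln}; since every step is an equality, the equivalence holds in both directions simultaneously. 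I expect the only delicate points to be the justification of the elimination-rank formula (making sure all ranks are the normal ranks over $\mathbb{R}(\xi)$) and the clean identification of the disjoint latent-column supports that yields rank additivity; the remaining bookkeeping is routine.
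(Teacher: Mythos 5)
Your proposal is correct and follows essentially the same route as the paper's proof: both pass to latent-variable representations of $\mathcal{C}_i^\text{can}$ and of $\mathcal{C}_1^\text{can}\wedge_{(s,k)}\mathcal{C}_2^\text{can}$, apply the output-cardinality formula $\mathtt{p}(\mathfrak{B}_w)=\operatorname{rank}\begin{bmatrix} P & Q\end{bmatrix}-\operatorname{rank}Q$ (which the paper invokes as Lemma~8 of \citep{belur2002} rather than reproving), exploit the block-diagonal column support of $\operatorname{col}(N_1,N_2)$ to obtain $\operatorname{rank}\operatorname{col}(N_1,N_2)=\operatorname{rank}N_1+\operatorname{rank}N_2$, and conclude by substituting the three cardinalities into the regularity condition. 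The only difference is that you derive the elimination-rank formula from the elimination theorem yourself, which makes the argument self-contained but does not change the approach.
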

\begin{proof}
By~\eqref{eq:ch5kerpi} and~\eqref{eq:ch5kerki}, the local canonical controller behavior is represented by the latent variable representation
\begin{align*}
&\mathcal{C}_i^\text{can}=\{(c_i,s,k)\,|\, \exists w_i \text{ so that }\\
&\begin{bmatrix}
R_i\!\left(\!\frac{\dd}{\dd t}\right) & S_i\!\left(\!\frac{\dd}{\dd t}\right) & M_i\!\left(\!\frac{\dd}{\dd t}\right) & 0\\ W_i\!\left(\!\frac{\dd}{\dd t}\right) & 0 & 0 & K_i\!\left(\!\frac{\dd}{\dd t}\right)
\end{bmatrix}\!\!\begin{bmatrix}
w_i\\ c_i\\ s\\ k
\end{bmatrix}=0\}.
\end{align*}
Hence, the interconnection of $\mathcal{C}_1^\text{can}$ and $\mathcal{C}_2^\text{can}$ is
\begin{align*}
&\mathcal{C}_1^\text{can}\wedge_{(s,k)} \mathcal{C}_2^\text{can}\\
&=\{(c_1,c_2,s,k)\,|\, (c_1,s,k)\in \mathcal{C}_1^\text{can}\text{ and } (c_2,s,k)\in \mathcal{C}_2^\text{can}\}\\
&=\{(c,s,k)\,|\,\exists w \text{ so that } \begin{bmatrix}
L_1\left(\frac{\dd}{\dd t}\right) & N_1\left(\frac{\dd}{\dd t}\right)\\ L_2\left(\frac{\dd}{\dd t}\right) & N_2\left(\frac{\dd}{\dd t}\right)
\end{bmatrix}\left[\begin{array}{c}
c\\s\\k\\\hline w
\end{array}\right]=0\},
\end{align*}
which is a latent variable representation for the canonical distributed controller (with latent variable $(w_1,w_2)$). By Lemma~8 in~\citep{belur2002}, the output cardinality of $\mathcal{C}_1^\text{can}\wedge_{(s,k)} \mathcal{C}_2^\text{can}$ can be determined from its latent variable representation as
\begin{align*}
\mathtt{p}(\mathcal{C}_1^\text{can}\wedge_{(s,k)} \mathcal{C}_2^\text{can})=\operatorname{rank}\begin{bmatrix}
L_1 & N_1\\ L_2 & N_2
\end{bmatrix}-\operatorname{rank}\begin{bmatrix}
N_1\\ N_2
\end{bmatrix}.
\end{align*}
Similarly, the output cardinality of $\mathcal{C}_1^\text{can}$ and $\mathcal{C}_2^\text{can}$ is given by
\begin{align*}
\mathtt{p}(\mathcal{C}_i^\text{can})=\operatorname{rank}\begin{bmatrix}
M_i & S_i & 0 & R_i\\ 0 & 0 & K_i & W_i
\end{bmatrix}-\operatorname{rank}\begin{bmatrix}
R_i\\ W_i
\end{bmatrix},\  i=1,2.
\end{align*}
It follows by~\eqref{eq:ch5LNmat} that $\mathtt{p}(\mathcal{C}_1^\text{can}\wedge_{(s,k)} \mathcal{C}_2^\text{can})$ is equal to
\begin{align} \label{eq:ch5pln}
\operatorname{rank}\begin{bmatrix}
L_1 & N_1\\ L_2 & N_2
\end{bmatrix}-\operatorname{rank}\begin{bmatrix}
R_1\\ W_1
\end{bmatrix}-\operatorname{rank}\begin{bmatrix}
R_2\\ W_2
\end{bmatrix}.
\end{align}
Hence, by \eqref{eq:ch5pln} and~\eqref{eq:ch5LNmat}, we find that
\begin{align*}
\mathtt{p}(\mathcal{C}_1^\text{can})+\mathtt{p}(\mathcal{C}_2^\text{can})&=\operatorname{rank}\begin{bmatrix}
L_1 & N_1
\end{bmatrix}+\operatorname{rank}\begin{bmatrix}
L_2 & N_2
\end{bmatrix}\\
&\quad-\operatorname{rank}\begin{bmatrix}
L_1 & N_1\\ L_2 & N_2
\end{bmatrix}+\mathtt{p}(\mathcal{C}_1^\text{can}\wedge_{(s,k)} \mathcal{C}_2^\text{can}).
\end{align*}
Therefore, $\mathtt{p}(\mathcal{C}_1^\text{can}\wedge_{(s,k)}\mathcal{C}_2^\text{can})=\mathtt{p}(\mathcal{C}_1^\text{can})+\mathtt{p}(\mathcal{C}_2^\text{can})$ if and only if~\eqref{eq:ch5propln} holds. This concludes the proof.
\end{proof}

Regularity of the interconnection of $\mathcal{C}_1^\text{can}$ and $\mathcal{C}_2^\text{can}$ turns out to be easily verifiable through regularity of the interconnections of subsystems $\mathcal{P}_1$ and $\mathcal{P}_2$ of the interconnected system that has to be controlled and of the interconnection of $\mathcal{K}_1$ and $\mathcal{K}_2$. We have the following result.
\begin{proposition}
The interconnection of $\mathcal{C}_1^\text{can}$ and $\mathcal{C}_2^\text{can}$ is regular if and only if the interconnection of $\mathcal{P}_1$ and $\mathcal{P}_2$ is regular and the interconnection of $\mathcal{K}_1$ and $\mathcal{K}_2$ is regular. That is, the interconnection of $\mathcal{C}_1^\text{can}$ and $\mathcal{C}_2^\text{can}$ is regular if and only if $\mathtt{p}(\mathcal{P}_1\wedge_{s} \mathcal{P}_2)=\mathtt{p}(\mathcal{P}_1)+\mathtt{p}(\mathcal{P}_2)$ and $\mathtt{p}(\mathcal{K}_1\wedge_{k} \mathcal{K}_2)=\mathtt{p}(\mathcal{K}_1)+\mathtt{p}(\mathcal{K}_2)$.
\end{proposition}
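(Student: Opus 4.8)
My plan is to start from the rank characterisation in Proposition~\ref{prop:ch5regcican} and to show that the single identity~\eqref{eq:ch5propln} decouples into the two rank identities asserting regularity of $\mathcal{P}_1\wedge_s\mathcal{P}_2$ and of $\mathcal{K}_1\wedge_k\mathcal{K}_2$. Since output cardinalities equal ranks of the polynomial matrices over the field $\mathbb{R}(\xi)$, everything reduces to ordinary linear algebra over that field. First I would restate the two target conditions in rank form: using~\eqref{eq:ch5kerpi}--\eqref{eq:ch5kerki} and Definition~\ref{def:ch5reg}, regularity of the plant interconnection reads $\operatorname{rank}\left[\begin{smallmatrix}R_1 & 0 & S_1 & M_1 & 0\\ 0 & R_2 & S_2 & 0 & M_2\end{smallmatrix}\right]=\operatorname{rank}[\,R_1\ S_1\ M_1\,]+\operatorname{rank}[\,R_2\ S_2\ M_2\,]$, and regularity of the desired-behaviour interconnection is the analogous identity in the blocks $[\,W_i\ K_i\,]$.

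The structural observation I would exploit concerns the matrix in~\eqref{eq:ch5LNmat}. Permuting its rows into the order $(\mathcal{P}_1,\mathcal{P}_2,\mathcal{K}_1,\mathcal{K}_2)$ and its columns into $(c_1,c_2,s,w_1,w_2,k)$, the two plant rows occupy only the columns $(c,s,w)$ and carry no $k$-entry, while the two controller rows occupy only $(w,k)$ and carry no $c,s$-entry; the two blocks overlap solely in the shared latent columns $w=(w_1,w_2)$. I would then phrase all three regularity statements as directness of a sum of row spaces: writing $V_{P_i}$ and $V_{K_i}$ for the row spaces of the $\mathcal{P}_i$- and $\mathcal{K}_i$-rows, the identity~\eqref{eq:ch5propln} is equivalent to $(V_{P_1}+V_{K_1})\cap(V_{P_2}+V_{K_2})=\{0\}$, whereas regularity of the plant (resp.\ desired-behaviour) interconnection is $V_{P_1}\cap V_{P_2}=\{0\}$ (resp.\ $V_{K_1}\cap V_{K_2}=\{0\}$). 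This reformulation is just the rank-of-stacked-blocks identity: the rank of two row-blocks equals the sum of their ranks minus the dimension of the intersection of their row spaces.

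The forward implication is then immediate and needs no computation: since $V_{P_i}\subseteq V_{P_i}+V_{K_i}$ for $i=1,2$, we get $V_{P_1}\cap V_{P_2}\subseteq (V_{P_1}+V_{K_1})\cap(V_{P_2}+V_{K_2})=\{0\}$, and likewise $V_{K_1}\cap V_{K_2}=\{0\}$. Hence regularity of the interconnection of $\mathcal{C}_1^\text{can}$ and $\mathcal{C}_2^\text{can}$ forces both component interconnections to be regular.

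The converse is where I expect the real difficulty. Given $V_{P_1}\cap V_{P_2}=\{0\}$ and $V_{K_1}\cap V_{K_2}=\{0\}$, I would take $v\in(V_{P_1}+V_{K_1})\cap(V_{P_2}+V_{K_2})$ and write $v=a_1+b_1=a_2+b_2$ with $a_i\in V_{P_i}$ and $b_i\in V_{K_i}$. Rearranging gives $a_1-a_2=b_2-b_1$, a vector lying both in the plant row space (hence with zero $k$-component) and in the desired-behaviour row space (hence with zero $c,s$-component), so it is supported only on the shared columns $w$. The crux is to deduce from this that the $s$- and $k$-parts of $v$ vanish, i.e.\ that $v=0$; this is exactly the point where the plant and desired-behaviour equations are coupled through the common latent variable $w$, and it is not resolved by the two per-type conditions taken separately. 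Concretely, the main obstacle is to control the intersection of the full plant row space $V_{P_1}+V_{P_2}$ with the full desired-behaviour row space $V_{K_1}+V_{K_2}$ in terms of the per-subsystem intersections $V_{P_1}\cap V_{K_1}$ and $V_{P_2}\cap V_{K_2}$ --- essentially to show that the shared-$w$ vector above splits subsystem-wise. I expect this step to require the finer way in which $\mathcal{P}_i$ and $\mathcal{K}_i$ are linked through $w_i$, rather than Definition~\ref{def:ch5reg} alone.
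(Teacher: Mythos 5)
Your row-space reformulation over $\mathbb{R}(\xi)$ is the right language, and your forward implication is correct (it is the same containment argument the paper uses for that direction: additive rank of the full stack forces additive rank of the $A$-rows and of the $B$-rows separately). But the difficulty you flag in the converse is not a lemma you failed to find --- the converse is \emph{false}, so the gap cannot be closed. Take all variables scalar, let $\mathcal{P}_i$ have minimal kernel representation $w_i+s=0$ (i.e.\ $R_i=1$, $S_i=1$, $M_i=0$) and $\mathcal{K}_i$ have minimal kernel representation $w_i+k=0$ (i.e.\ $W_i=1$, $K_i=1$), for $i=1,2$. Then $\mathtt{p}(\mathcal{P}_1\wedge_s\mathcal{P}_2)=2=\mathtt{p}(\mathcal{P}_1)+\mathtt{p}(\mathcal{P}_2)$ and $\mathtt{p}(\mathcal{K}_1\wedge_k\mathcal{K}_2)=2=\mathtt{p}(\mathcal{K}_1)+\mathtt{p}(\mathcal{K}_2)$, so both component interconnections are regular. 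Yet eliminating $w_i$ gives $\mathcal{C}_i^\text{can}=\{(c_i,s,k)\,|\,s-k=0\}$ for \emph{both} $i$: the two local canonical controllers impose the identical equation, so $\mathtt{p}(\mathcal{C}_1^\text{can}\wedge_{(s,k)}\mathcal{C}_2^\text{can})=1\neq 2=\mathtt{p}(\mathcal{C}_1^\text{can})+\mathtt{p}(\mathcal{C}_2^\text{can})$, and the controller interconnection is not regular. In your notation (columns ordered $(c_1,c_2,s,k,w_1,w_2)$ as in \eqref{eq:ch5LNmat}), $A_1-B_1=A_2-B_2=(0,0,1,-1,0,0)$ is a nonzero element of $(V_{P_1}+V_{K_1})\cap(V_{P_2}+V_{K_2})$, although all four pairwise intersections $V_{P_1}\cap V_{P_2}$, $V_{K_1}\cap V_{K_2}$, $V_{P_1}\cap V_{K_2}$, $V_{P_2}\cap V_{K_1}$ are zero. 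The example is not pathological: it satisfies the implementability condition of Proposition~\ref{prop:distrcan} (here $\mathcal{N}(\mathcal{P}_\mathcal{I})=(\mathcal{K}_\mathcal{I})_w=(\mathcal{P}_\mathcal{I})_w=\{w\,|\,w_1=w_2\}$), and it survives making $M_i\neq 0$, e.g.\ by appending the row $c_i=0$ to each $\mathcal{P}_i$.

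This also pinpoints where the paper's own proof of that direction breaks down, and it is precisely the inference you declined to make. The paper argues: the $A$-rows are independent of each other (plant regularity), the $B$-rows are independent of each other (desired-behavior regularity), and the cross pairs $A_1,B_2$ and $A_2,B_1$ are independent ``by construction'' (disjoint column supports); hence the row spaces of $\left[\begin{smallmatrix}A_1\\ B_1\end{smallmatrix}\right]$ and $\left[\begin{smallmatrix}A_2\\ B_2\end{smallmatrix}\right]$ intersect trivially. Pairwise trivial intersections of subspaces do not imply trivial intersection of their sums, and the example above defeats exactly this step. So your partial proof and your diagnosis are both correct: only the direction you proved (regularity of $\mathcal{C}_1^\text{can}\wedge_{(s,k)}\mathcal{C}_2^\text{can}$ implies regularity of $\mathcal{P}_1\wedge_s\mathcal{P}_2$ and of $\mathcal{K}_1\wedge_k\mathcal{K}_2$) holds in general; the reverse direction requires a genuinely stronger hypothesis controlling how the $\mathcal{P}_i$- and $\mathcal{K}_i$-rows interact through the shared latent variable $w$, and cannot be recovered from Definition~\ref{def:ch5reg} applied to the plant and desired-behavior interconnections alone.
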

\begin{proof}
Let $R_i\left(\frac{\dd}{\dd t}\right)w_i+S_i\left(\frac{\dd}{\dd t}\right)s+M_i\left(\frac{\dd}{\dd t}\right)c_i=0$ be a minimal kernel representation for $\mathcal{P}_i$ and let $W_i\left(\frac{\dd}{\dd t}\right)w_i+K_i\left(\frac{\dd}{\dd t}\right)k=0$ be a minimal kernel representation for $\mathcal{K}_i$, $i=1,2$.

$(\Rightarrow)$ Assume that $\mathtt{p}(\mathcal{P}_1\wedge_{s} \mathcal{P}_2)=\mathtt{p}(\mathcal{P}_1)+\mathtt{p}(\mathcal{P}_2)$ and that $\mathtt{p}(\mathcal{K}_1\wedge_{k} \mathcal{K}_2)=\mathtt{p}(\mathcal{K}_1)+\mathtt{p}(\mathcal{K}_2)$. We then have that
\begin{align}
&\operatorname{rank}\begin{bmatrix}
R_1 & M_1 & 0 & 0 & S_1\\ 0 & 0 & R_2 & M_2 & S_2
\end{bmatrix}=\mathtt{p}(\mathcal{P}_1)+\mathtt{p}(\mathcal{P}_2)\nonumber\\
&=\operatorname{rank}\begin{bmatrix}
R_1 & M_1 & S_1
\end{bmatrix}+\operatorname{rank}\begin{bmatrix}
R_2 & M_2 & S_2
\end{bmatrix},\label{eq:ch5rankp}\\
&\operatorname{rank}\begin{bmatrix}
W_1 & 0 & K_1\\ 0 & W_2 & K_2
\end{bmatrix}=\mathtt{p}(\mathcal{K}_1\wedge_s \mathcal{K}_2)=\mathtt{p}(\mathcal{K}_1)+\mathtt{p}(\mathcal{K}_2)\nonumber\\
&=\operatorname{rank}\begin{bmatrix}
W_1 & K_1
\end{bmatrix}+\operatorname{rank}\begin{bmatrix}
W_2 & K_2
\end{bmatrix}. \label{eq:ch5rankk}
\end{align}
By Proposition~\ref{prop:ch5regcican}, $\mathcal{C}_1^\text{can}\wedge_{(s,k)} \mathcal{C}_2^\text{can}$ is regular if and only if~\eqref{eq:ch5propln} holds, i.e., if and only if
\begin{align} \label{eq:ch5rank}
\operatorname{rank}\begin{bmatrix}
A_1\\ B_1\\ A_2\\ B_2
\end{bmatrix}=\operatorname{rank}\begin{bmatrix}
A_1\\ B_1
\end{bmatrix}+\operatorname{rank}\begin{bmatrix}
A_2\\ B_2
\end{bmatrix},
\end{align}
with the sub-matrices $A_1:=\begin{bmatrix}
M_1\  0 \ S_1 \ 0 \ R_1  \ 0
\end{bmatrix}$, $A_2:=\begin{bmatrix}
0 \ M_2 \ S_2\  0\ 0\ R_2
\end{bmatrix}$, $B_1:=\begin{bmatrix}
0 \ 0 \  0 \  K_1 \  W_1 \  0
\end{bmatrix}$ and $B_2:=\begin{bmatrix}
0 \  0 \  0 \  K_2 \  0 \  W_2
\end{bmatrix}$.
Now, by~\eqref{eq:ch5rankp}, $A_1$ and $A_2$ do not have rows that are linearly dependent. Similarly, by~\eqref{eq:ch5rankk}, $B_1$ and $B_2$ do not have rows that are linearly dependent. Furthermore, $B_1$ and $A_2$ do not have rows that are linearly dependent and $A_1$ and $B_2$ do not have rows that are linearly dependent, by construction. Hence, $\left[\begin{smallmatrix}
A_1\\ B_1
\end{smallmatrix}\right]$ and $\left[\begin{smallmatrix}
A_2\\ B_2
\end{smallmatrix}\right]$ do not have rows that are linearly dependent. Therefore, \eqref{eq:ch5rank} holds true and it follows that $\mathcal{C}_1^\text{can}\wedge_{(s,k)} \mathcal{C}_2^\text{can}$ is regular.

$(\Leftarrow)$ Let $\mathcal{C}_1^\text{can}\wedge_{(s,k)} \mathcal{C}_2^\text{can}$ be regular. Then~\eqref{eq:ch5rank} holds true. But then $A_1$ and $A_2$ cannot contain dependent rows. Hence, $\mathtt{p}(\mathcal{P}_1\wedge_{s} \mathcal{P}_2)=\mathtt{p}(\mathcal{P}_1)+\mathtt{p}(\mathcal{P}_2)$. Moreover, by~\eqref{eq:ch5rank}, $B_1$ and $B_2$ cannot contain dependent rows. Hence, $\mathtt{p}(\mathcal{K}_1\wedge_{k} \mathcal{K}_2)=\mathtt{p}(\mathcal{K}_1)+\mathtt{p}(\mathcal{K}_2)$. This completes the proof.
\end{proof}

\section{Conclusions}
In this paper, we have considered the distributed control problem for linear interconnected systems in a behavioral setting. This setting allows to view distributed control from a more general perspective, where controllers are not intrinsically viewed as signal processors. Given a desired behavior represented by a linear interconnected system, the canonical distributed controller implements it, provided that necessary and sufficient conditions on the manifest behavior of the plant and desired behavior are satisfied. We have shown that regularity of the interconnections between subsystems in the plant and desired behavior are necessary and sufficient for regularity of the interconnections between subsystems in the canonical distributed controller.






\bibliographystyle{elsarticle-num-names}
\bibliography{rfrncs21a}

\end{document}